\theoremstyle{plain}
\newtheorem{theorem}{Theorem}%
\newtheorem{lemma}{Lemma}
\newtheorem{proposition}[theorem]{Proposition}
\theoremstyle{definition}
\newtheorem{definition}{Definition}
\newtheorem{assumption}{Assumption}
\newtheorem*{remark}{Remark}
\crefname{axiom}{Axiom}{Axioms}
\crefname{assumption}{Assumption}{Assumptions}
\renewcommand{\algocf@captiontext}[2]{#1\algocf@typo. \AlCapFnt{}#2} %
\def\@algocf@capt@plain{top}
\renewcommand{\algocf@makecaption}[2]{%
  \addtolength{\hsize}{\algomargin}%
  \sbox\@tempboxa{\algocf@captiontext{#1}{#2}}%
  \ifdim\wd\@tempboxa >\hsize%
    \hskip .5\algomargin%
    \parbox[t]{\hsize}{\algocf@captiontext{#1}{#2}}%
  \else%
    \global\@minipagefalse%
    \hbox to\hsize{\box\@tempboxa}%
  \fi%
  \addtolength{\hsize}{-\algomargin}%
}
\def\v{{\varepsilon}}
\def\N{{\mathbb{N}}}
\def\Q{{\mathcal{Q}}}
\def\E{{\mathbb{E}}}
\newcommand{\GG}[1]{} %
\newcommand{\PTheta}{\mathcal{P}(\Theta)}
\DeclareMathOperator*{\argmin}{arg\,min}
\DeclareMathOperator*{\essinf}{ess\,inf}
\DeclareMathOperator*{\supp}{supp}
\newcommand{\acro}[1]{\textsc{#1}\xspace}
\newcommand{\FedGVI}{\acro{FedGVI}}
\def\P{{\mathcal{P}}}
\def\R{{\mathbb{R}}}
\definecolor{ibm0}{RGB}{255,176,0}
\definecolor{ibm1}{RGB}{254,97,0}
\definecolor{ibm2}{RGB}{220,38,127}
\definecolor{ibm3}{RGB}{120,94,240}
\definecolor{ibm4}{RGB}{100,143,255}
\definecolor{accessibleBlue}{RGB}{0,0,0}
\definecolor{accessibleVermillion}{RGB}{0,114,178}
\title[Rates of Convergence of GVI Posteriors under Prior Misspecification]{Rates of Convergence of Generalised Variational Inference Posteriors under Prior Misspecification}
\author[T. Mildner]{Terje Mildner}
\address{Department of Statistics and Department of Computer Science, University of Warwick, Coventry, UK}
\email{{Terje.Mildner@kellogg.ox.ac.uk;Terje.Mildner@gmail.com}}
\author[P. Giampouras]{Paris Giampouras}
\address{Department of Computer Science, University of Warwick, Coventry, UK}
\email{Paris.Giampouras@warwick.ac.uk}
\author[T. Damoulas]{Theodoros Damoulas}
\address{Department of Statistics and Department of Computer Science, University of Warwick, Coventry, UK}
\email{T.Damoulas@warwick.ac.uk}
\begin{document}
	\date\today
	\subjclass[2020]{Primary 62G20; secondary 62G35, 62C10, 62F12, 62F15, 62F35.}
	\keywords{Generalised Variational Inference, Prior Misspecification, Model Misspecification, Frequentist Consistency, Rates of Convergence.}

\begin{abstract}
We prove rates of convergence and robustness to prior misspecification within a Generalised Variational Inference (GVI) framework with bounded divergences. This addresses a significant open challenge for GVI and Federated GVI that employ a different divergence to the Kullback--Leibler under prior misspecification, operate within a subset of possible probability measures, and result in intractable posteriors.
Our theoretical contributions extend to misspecified priors that lead to inconsistent Bayes posteriors. 
In particular, we are able to establish sufficient conditions for existence and uniqueness of GVI posteriors on arbitrary Polish spaces, prove that the GVI posterior measure concentrates on a neighbourhood of loss minimisers, and extend this to rates of convergence regardless of the prior measure.
\end{abstract}
\maketitle

\section{Introduction}\label{sec:intro}

Consider the problem of posterior inference after having constructed a prior measure $\Pi$ and a hypothesis space $(\Theta,\mathcal{T})$ with associated hypothesis measures in the model $\P=\{P_\theta:\theta\in\Theta\}$. We are concerned with situations in which (a) the prior is misspecified, leading to inconsistent Bayes posteriors \cites{diaconis1986a,diaconis1986b}, (b) the model is misspecified, not containing the data generating process \cite{kleijn2006}, and (c) computational capabilities limit our ability to conduct posterior inference \cite{pinski2015}. 

To this effect we consider posterior inference through Generalised Variational Inference (GVI) as introduced by Knoblauch, Jewson, and Damoulas \cite{jeremias2022}. 
The present paper is concerned with characterising the dynamics of updating a posterior measure in the presence of (a), (b), and (c) through GVI and places particular emphasis on prior misspecification. In particular, we aim to answer the following open problems: under what conditions is a Generalised Variational Inference posterior measure robust to prior misspecification? Do GVI posteriors exist when observations come from an infinite dimensional stochastic process and are these asymptotically consistent? At what rate does this convergence take place?

Throughout, we consider $X_1,X_2,...$, taking values in $(\Xi,\mathcal{X})$, a sequence of observable random variables with unspecified dependencies generated by some unknown data generating process with $P_0$ its infinite dimensional process distribution. GVI recasts Bayesian updating as a problem in the calculus of variation, where we link observables and hypotheses through some loss function $L(X_1^n,\,\theta)$, regularise with respect to the prior through a divergence $D(\cdot:\Pi)$, and optimise over a restricted space of possible posterior measures $\Q\subset\PTheta$. The GVI posterior is defined as
\begin{equation}\label{eqn:gvi_min}
Q_n\in\argmin_{\nu\in\mathcal{Q}}\,\left\{T_n(\nu):=n\cdot J_{L_n}(\nu)+\beta ^{-1}D(\nu:\Pi)\right\}
\end{equation}
whenever it exists. Here $J_{L_n}(\nu):=\E_\nu[L(X_1^n,\,\theta)]$ is the linear functional given by integration of the loss with $n$ observables against some probability measure $\nu\in\Q$ and the constant $\beta>0$ denotes a learning rate parameter. As our results do not require there to be any data, we may simply drop this dependence whenever convenient and denote $L(X_1^n,\theta)=L_n(\theta)$.

The study of asymptotic consistency is classical for Bayes as it justifies an updating scheme over its frequentist counterpart by illustrating that we may come arbitrarily close to the optimal given sufficient observations \cites{schwartz1965,berk1966,diaconis1986a,barron1999,ghosal2000,shen2001,walker2004,ghosal2007,shalizi2009,walker2013}.
Considering now the question of asymptotic consistency of posterior measures reveals why it is advantageous to study GVI posteriors rather than their (Generalised) Bayesian counterparts under prior misspecification. Consider $(\Pi_n)_n$ the sequence of Bayes posteriors, and $d(\cdot,\cdot)$ a dissimilarity score between hypothesis measures, then the Bayes posteriors are consistent if for all $\varepsilon>0$ it holds with $P_0$ probability 1 that 
$$\Pi_n(P_\theta\in\P\cap\supp{\Pi}:d(P_\theta,P_0)>\varepsilon\,|\,X_1,X_2,...,X_n)\rightarrow0.$$
The choice of dissimilarity measure $d(\cdot,P_0)$ now illustrates our interest in GVI. A typical such measure is given as $d(P_\theta:P_0)=D_{KL}(P_\theta:P_0)-\essinf_{Q\in\P}^\Pi D_{KL}(Q:P_0)$ where the essential infimum over $\P$ is taken with respect to the prior $\Pi$, see for instance \cites{berk1966,walker2004,shalizi2009}. Further, we may only concentrate on measures in the support of the prior which is immediate in the formulation of the Bayes posterior or its variational counterpart in \cref{eqn:gvi_min} (see \cite{zellner1988}). %

The Bayesian, by nature of being Bayesian, can elicit arbitrary priors and may hence encounter prior misspecification; she must either suppose the prior encodes the best available knowledge of the underlying process---this however is scarcely practicable and processes exist for which we cannot intuit priors that lead to consistent posteriors---or accept the arbitrariness \cites{diaconis1986b,kleijn2006}. In fact, the nonparametric setting intrinsically encumbers the choice of reasonable prior as natural choices may lead to inconsistency \cites{diaconis1986a,diaconis1986b}, due to these not satisfying structural assumptions as required for consistency \cites{kleijn2006, shalizi2009}. Hence, the prior may be misspecified.

Moreover, it is scarcely the case that the model is well specified, that is the model family $\Theta$ will not contain the data generating process \cites{berk1966,shalizi2009,walker2013}. 
Furthermore, since the Kullback--Leibler divergence is known to not be robust (cf. \cite{jewson2018}), (b) may lead to suboptimal performance due to the choice of the negative log likelihood as a loss function.
Lastly, as Bayesian inference is rarely computationally tractable, optimising over a simpler space of probability measures---as done in Variational Inference with e.g. the set of Gaussians \cite{katsevich2024}---allows for computational feasibility. 

Generalised Variational Inference recasts Bayesian posterior updating as an optimisation problem in the calculus of variation over a simpler space of measures, allowing decision makers to mitigate (a), (b), and (c); see \cref{fig:misspec} for reference.

\begin{figure}
	\centering
	\begin{tikzpicture}[thick,scale=0.7, every node/.style = {scale=0.7}]%
		\draw[dashed,opacity=0.6] (-5,-3) -- (-2,0) -- (4,0);
		\draw[dashed,opacity=0.6] (-2,-0) -- (-2,6);

		\draw[opacity=0.15] (-4,4) -- (2,4) -- (2,-2) -- (-4,-2) -- (-4,4);
		\draw[opacity=0.15] (-3,5) -- (3,5) -- (3,-1) -- (-3,-1) -- (-3,5);
		\draw[opacity=0.15] (-5,-1) -- (1,-1)-- (4,2) -- (-2,2) -- (-5,-1);
		\draw[opacity=0.15] (-5,1) -- (1,1) -- (4,4) -- (-2,4) -- (-5,1);
		\draw[opacity=0.15] (-3,-3)--(-3,3)--(0,6 )-- (0,0)--(-3,-3);
		\draw[opacity=0.15] (-1,-3)--(-1,3)-- (2,6) -- (2,0) -- (-1,-3);

		\draw[opacity=0.15] (-4,0) -- (2,0);
		\draw[opacity=0.15] (-3,1) -- (3,1);
		\draw[opacity=0.15] (-4,2) -- (2,2);
		\draw[opacity=0.15] (-3,3) -- (3,3);
		
		\draw[opacity=0.15] (-2,-2) -- (-2,4);
		\draw[opacity=0.15] (0,-2) -- (0,4);
		\draw[opacity=0.15] (-1,-1) -- (-1,5);
		\draw[opacity=0.15] (1,-1) -- (1,5);
		
		\draw[opacity=0.15] (-3,-1) -- (0,2);
		\draw[opacity=0.15] (-1,-1) -- (2,2);
		\draw[opacity=0.15] (-3,1) -- (0,4);
		\draw[opacity=0.15] (-1,1) -- (2,4);

		\fill[fill=white!100,opacity=0.8] (-3.9,-1.6) -- (-3.1,-1.6) -- (-3.1,-1.4) -- (-3.9,-1.4) -- (-3.9,-1.6);
		\fill[fill=white!100,opacity=0.8] (-2.7,-0.3) -- (-1.3,-0.3) -- (-1.3,0.3) -- (-2.7,0.3) -- (-2.7,-0.3);
		\fill[fill=white!100,opacity=0.8] (-4.25,2.05) -- (-2.7,2.05) -- (-2.7,1.5) -- (-2.85,1.5) -- (-2.85,0.95) -- (-4.15,0.95) -- (-4.25,2.05);
		\fill[fill=white!100,opacity=0.8] (1.9,1.8) -- (1.1,1.8) -- (1.1,1.2) -- (1.9,1.2) -- (1.9,1.8);
		
		\fill[fill=ibm2!30,opacity=0.5] (-2,-2) -- (-4,-2) -- (-4,0) -- (-2,2) -- (0,2) -- (0,0) -- (-2,-2);
		\node[align=center] at (-2,0) {Variational\\Inference};
		
		\fill[fill=ibm0!30,opacity=0.5] (-5,-3) -- (-5,-1) -- (-4,0) -- (-2,0) -- (-2,-2) -- (-3,-3) --(-5,-3) ;
		\node[align=center] at (-3.5,-1.5) {Bayes};
		
		\fill[fill=ibm4!30,opacity=0.5] (-5,-1) -- (-5,3) -- (-4,4) -- (-2,4) -- (-2,0) -- (-3,-1) -- (-5,-1);
		\node[align=center] at (-3.5,1.5) {Generalised\\Bayesian\\Inference};
		
		\fill[fill=ibm3!30,opacity=0.5] (-1,-3) -- (-1,3) -- (2,6) -- (4,6) -- (4,0) -- (1,-3) -- (-1,-3);
		\node[align=center] at (1.5,1.5) {Our\\Work};

		\node[align=center,color=accessibleBlue] at (-4,-3.35) {None};
		\node[align=center,color=accessibleBlue] at (-2,-3.35) {Mild};
		\node[align=center,color=accessibleBlue] at (0,-3.35) {Severe};
		
		\node[align=center,color=accessibleBlue] at (-2,-4) {Prior Misspecification};
		\draw[->|] (-3.75,-4) -- (-5,-4);
		\draw[->|] (-0.25,-4) -- (1,-4);
		
		\node[align=center,color=accessibleBlue] at (-6.4,-2) {Well Specified\\Model};
		\node[align=center,accessibleBlue] at (-6.4,0) {Mild Model\\Misspecification};
		\node[align=center,accessibleBlue] at (-6.4,2) {Severe Model\\Misspecification};

		\node[align=center,accessibleBlue] at (-5.75,3.75) {No Computational\\Constraints};
		\node[align=center,color=accessibleBlue] at (-4.85,4.75) {Mild Computational\\Constraints};
		\node[align=center,color=accessibleBlue] at (-3.95,5.75) {Severe Computational\\Constraints};

		\node[align=center,color=accessibleVermillion,font=\itshape] at (-1,6.5) {KL\\Divergence};
		\node[align=center,color=accessibleVermillion,font=\itshape] at (1,6.5) {Unbounded\\Divergence};
		\node[align=center,color=accessibleVermillion,font=\itshape] at (3,6.5) {Bounded\\Divergence};
		
		\node[align=center,color=accessibleVermillion,font=\itshape] at (5.35,1) {Negative\\Log--Likelihood};
		\node[align=center,color=accessibleVermillion,font=\itshape] at (5.35,3) {Divergence--\\Based Loss};
		\node[align=center,color=accessibleVermillion,font=\itshape] at (5.35,5) {Arbitrary\\Loss};
		
		\node[align=center,color=accessibleVermillion,font=\itshape] at (4.75,-0.75) {Parametrised\\$\mathcal{Q}\subset\PTheta$};
		\node[align=center,color=accessibleVermillion,font=\itshape] at (3.5,-1.75) {Arbitrary\\$\mathcal{Q}\subset\PTheta$};
		\node[align=center,color=accessibleVermillion,font=\itshape] at (2.2,-2.75) {$\PTheta$};

		\draw[] (-5,-3) -- (-5,3) -- (1,3) --(1,-3) -- (-5,-3) -- (-5,3);
		\draw[] (-5,3) -- (-2,6) -- (4,6) -- (1,3);
		\draw (4,1) -- (4,0) -- (1,-3) -- (0,-3);
		\draw (3,-1) -- (4,0) -- (4,6) -- (3,6);
	\end{tikzpicture}
\caption{A taxonomy of Generalised Variational Inference.}
\label{fig:misspec}
\end{figure}
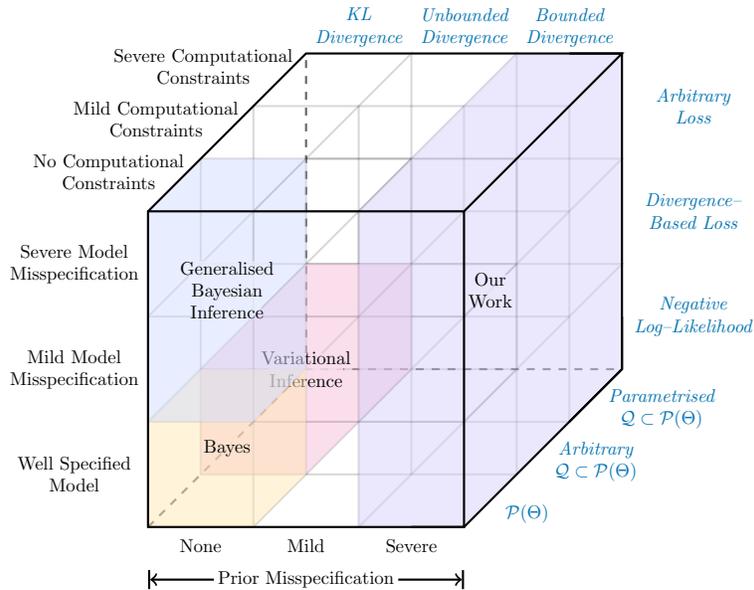
Notably, our work differs significantly from Bayesian sensitivity analysis, and robustness to the prior of a Bayesian posterior, as discussed for instance in \cites{berger1985,gustafson1995}. We are not concerned with small perturbations of the prior towards some other probability measure, but rather with priors that will in general not result in consistent Bayes posteriors, such as those that are mutually singular with the GVI posterior, or not contain KL neighbourhoods of the data generating process in their support. We provide a fuller account in the appendix.

In this paper, we develop theory concerning GVI posterior measures over infinite dimensional hypothesis spaces $\Theta$. The main contribution of this paper is \cref{thm:rates} where we establish rates of convergence of GVI posterior measures just slower that $n^{-1}$ when using bounded divergences. In doing so we first present sufficient conditions for existence and uniqueness of GVI posteriors, which we then use as a basis to characterise asymptotic concentration to sets containing the minimisers of the loss.
Significantly, our theory allows us to deal with instances in which the prior is severely misspecified, such as in the case where the prior has full measure on a subset of $\Theta$ where the Kullback--Leibler divergence rate is infinite and where the prior and GVI posterior are mutually singular. This is highlighted in \cref{fig:misspec}.

This paper is structured as follows. In \cref{sec:background} we provide the necessary background for Generalised Variational Inference, prior misspecification, and the assumptions used throughout the paper. 
\cref{sec:results} states and discusses the main results of this paper, \cref{sec:extensions} presents extensions to these, including predictive consistency of GVI under standard losses, and we provide the complete proofs in \cref{sec:proofs}. 
To highlight our results, \cref{sec:examples} illustrates how GVI is robust to prior misspecification in cases where Bayesian posteriors are inconsistent. 
In \cref{sec:discussion} we provide some concluding remarks on our results, their relevance to extensions of GVI in Federated Learning \cite{mildner2025}, and relevant open problems.

\section{Preliminaries}\label{sec:background}

\subsection{Notation} 

Let $X_1, X_2, ..., X_n$ be a sequence of observable random variables, short $X_1^n$, generated by the abstract probability space $(\Omega, \mathcal{F}, P_0)$ and taking values in the measurable space $(\Xi, \mathcal{X})$ with $P_0$ distribution.
We consider $n\in\mathbb{N}$ and let this tend to infinity for the concentration results. Denote by $\sigma(X_1^n)$ the natural filtration of this sequence. 
We do not place any restrictions on the dependence of the data, as long as the loss associated with it satisfies the respective assumptions, especially \cref{asp:L-ae}.

We define our hypothesis measures $P_\theta$, with $\theta$ taking values in the measurable space, $(\Theta, \mathcal{T})$ where $\Theta$ is a Polish space, in general infinite dimensional, endowed with its Borel $\sigma$--algebra, and we assume that $P_0$ and $P_\theta$ admit densities $p_0$ and $p_\theta$ respectively%
. As we allow for model misspecification, in the nonparametric setting we suppose that $P_0\notin\Theta$ in general\footnote{In the parametric setting this translates to $\nexists\theta_0\in\Theta$ such that $P_{\theta_0}=P_\theta$ in the sense of measures.}. 
However, we may find elements of $\Theta$ that are closest, in some sense, to the data generating process $P_0$, which we will denote using the `$^\star$' notation.

Throughout, we will study probability measures in $\PTheta,$ the space of all Borel probability measures on $(\Theta, \mathcal{T})$. Note that $\Theta$ being Polish implies that $\PTheta$ is also Polish, and hence allowing for arbitrary Polish spaces allows for nonparametric inference \cites{kechris1995,dembo2010}, we identify $\Theta$ and $\P$. We begin with some prior $\Pi$ and update this to the GVI posterior $Q_n$.

We further define the terms of \cref{eqn:gvi_min}. Let $D:\PTheta^2\rightarrow\mathbb{R}_{\ge0}$ be a statistical divergence between probability measures. $L(X_1^n,\,\theta)$ is a loss function connecting the data and the hypotheses, and $\mathcal{Q}\subset\PTheta$ is a (sub)space of probability measures we are optimising over. Clearly $n\in\mathbb{N}$ is the number of observables, and the term $\beta>0$ represents a fixed learning rate parameter. We later relax the assumptions on $\beta$.

We say that eventually $a_n\gtrsim b_n$ to denote the existence of an unspecified constant $k>0$, depending not on $n$, such that for all $n$ sufficiently large it holds that $a_n\ge k\cdot b_n$. For a measure $\nu\in\PTheta$ and a set $A\in\mathcal{T}$ we denote $\nu A:= \nu(\{\theta\in\Theta:\theta\in A\})$. Note when using $\subset$, we do not distinguish between strict containment or up to equality as it bears no significance on the theory. We denote $\mathbb{R}_+:=\{a\in\mathbb{R}: a>0\}$ and $\mathbb{R}_{\ge 0}:=\{a\in\mathbb{R}: a\ge0\}$.

\subsection{Assumptions}\label{sec:asp}
The assumptions required throughout the paper are primarily related to the behaviour of the data generating process $P_0$ and its interaction with the choices of loss functions, divergence, and the set of measures $\Q$.
\begin{assumption}\label{asp:loss}
The loss $L(X_1^n,\,\theta)$ is $\sigma(X_1^n)\times\mathcal{T}$--measurable and lower bounded\footnote{Loss functions used in ML are typically non--negative, we, in accordance with the literature on GVI and GBI, allow for more general functions through the lower boundedness.} 
for all $n\in\N$.
\end{assumption}

The next assumption requires for $\Q$ to be a sensible choice which does not make the objective pathological.
\begin{assumption}\label{asp:finite}
	There exists at least one $Q\in\mathcal{Q}$ such that $T_n(Q)<\infty$.
\end{assumption}

In order to reason about the dynamical properties of GVI posteriors we require that minimisers of \cref{eqn:gvi_min} exist. In particular, we impose the following two assumptions which will be sufficient to prove existence of GVI posteriors in \cref{thm:existenceGVI} given that $\Q$ is weak$^\star$ closed.
\begin{assumption}\label{asp:lscL}
	The losses as functions on $\Theta$, $L_n:\Theta\rightarrow\mathbb{R}\cup\{+\infty\}$, are coercive and lower semi--continuous for all $n\in\mathbb{N}$.
\end{assumption}
This assumption is slightly stronger than required as we require for the functional $\nu\mapsto J_{L_n}(\nu)$ to be lower semi--continuous; the loss having this property immediately ensures this. As a consequence however, the losses will achieve their infima on any compact set. Moreover, we are able to discard the assumption of the coerciveness of the loss whenever $\Theta$ is compact.

In order for the entire GVI functional $\nu\mapsto T_n(\nu)$ to be lower semi--continuous, we require the same to hold for the divergence.
\begin{assumption}\label{asp:lscD}
	The divergence $D:\PTheta\times\PTheta\rightarrow\mathbb{R}_{\ge0}\cup\{+\infty\}$ is lower semi--continuous in its first argument.
\end{assumption}
We remark here that this assumption is satisfied by many common choices of divergence including the Kullback--Leibler and the Total Variation.
This, together with \cref{asp:lscL}, provides a key tool in proving the existence and uniqueness of GVI posterior measures in \cref{thm:existenceGVI}.

To characterise the robustness of GVI to prior misspecification, we analyse the behaviour of GVI posteriors under different priors. In particular, we denote by $\mathcal{G}\subset\PTheta$ the set of possible priors. It turns out that the following assumption provides a sufficient condition for consistency, and in fact rates of convergence, of GVI to hold regardless of the prior $\Pi\in\mathcal{G}$.

\begin{assumption}\label{asp:bdd} 
For given $\mathcal{Q}$ and $\mathcal{G}$ the divergence in \cref{eqn:gvi_min} is bounded above, i.e. $\exists M> 0$ such that $\forall Q\in\mathcal{Q}$ and $\forall\Pi\in\mathcal{G}$ we have $D(Q:\Pi)\le M$.
\end{assumption}

\begin{remark}
It is worth noting that this may be satisfied by both bounded divergences, such as the total variation distance, as well as specific choices of the set of measures we are optimising over, $\mathcal{Q}$, and the set of possible priors, $\mathcal{G}$. In the former case, we may allow for $\mathcal{G}=\PTheta$ and $\Q\subset\PTheta$ without restriction. The latter is more restrictive and for instance includes the case where $\mathcal{G}$ is a singleton $\{\Pi\}$ and $\Q\subset\{\nu\in\PTheta:D(\nu:\Pi)\le M\}$.
\end{remark}

Finally, we require that the dynamics of the data generating process $P_0$ and the loss functions, $L_n$, mesh sufficiently well together.
\begin{assumption}\label{asp:L-ae}
There exists a $\mathcal{T}-$measurable function $L:\Theta\rightarrow\mathbb{R}$ such that $\forall\theta\in\Theta$
$$
\lim_{n\rightarrow\infty}L(X_1^n,\,\theta)=L(\theta)
$$
$P_0$ almost--surely, which is lower semi--continuous and that it $P_0$--a.s. holds that
$$\inf_{\theta\in\Theta}L(\theta)\ge\limsup_n\inf_{\theta\in\Theta}L(X_1^n,\theta)>-\infty\,.$$
\end{assumption}
This assumption is required for the concentration results of \cref{thm:asymptotics,thm:rates}, and the first part is standard in Bayesian asymptotics (see e.g. \cite{shalizi2009}) where an asymptotic equipartition property, as in the Shannon--MacMillan--Breimann theorem, is required to hold. 
Unless otherwise stated all convergence takes place in the number of observables $n$.

As we will be working with the infima of the different loss functions throughout, we follow \cite{shalizi2009} by adopting  further notational abbreviations stemming from large deviation theory.
\begin{gather}\label{eqn:infDef}
L_n(A)\equiv\inf_{\theta\in A}\,L(X_1^n,\,\theta)\,,\qquad
L(A)\equiv\inf_{\theta\in A}\,L(\theta)
\end{gather}
As we have not fixed a metric, we follow \cites{berk1966,shalizi2009,miller2021} in using:
\begin{equation}
N_\varepsilon:=\{\theta\in\Theta:L(\theta)\le L(\Theta)+\varepsilon\}\,.
\end{equation}

\subsection{Misspecification of the Model and the Prior}\label{sec:prior_mis}
As identified in \cites{walker2013,shalizi2009} the elements of $\Theta$ that the Bayesian posterior targets are those minimising the Kullback--Leibler divergence between the data generating process and the hypothesis measure. Denoting this as a function on $\Theta$, we write:
$$
h(\theta)=\lim_{n\rightarrow\infty}\frac1n\mathbb{E}_{P_0}\left[\log \frac{p_0(x_1^n)}{p_\theta(x_1^n)}\right]
$$
which may possibly be infinite; note that the expectation is with respect to $P_0$ and hence over $\Omega$. This is the loss function targeted in classical Bayesian inference, in particular the elements of $\Theta$ that minimise the above minimise the Kullback--Leibler divergence between the data generating process and the hypothesis measure. However, this may be suboptimal in many cases where the model is misspecified, for instance not containing the data generating process, (see e.g. \cites{bernardo2000,mclatchie2025}). It may therefore be desirable to instead minimise a different robust divergence, such as a density power divergence \cite{basu1998}, but we do not necessitate the loss to approximate one and it may be more general.

Moreover, if the prior has full measure on the set of elements where this KL rate is infinite, that is on the set $I:=\{\theta\in\Theta:h(\theta)=\infty\}$, then all previous results on consistency, let alone rates of convergence, of (Generalised) Bayesian inference fail \cite{shalizi2009}. In this case we say that the prior is severely misspecified. In this we also include priors that lead to inconsistent posteriors such as those observed in \cite{diaconis1986b}.
To make this rigorous, we place the following definition of robustness to prior misspecification.
\begin{definition}\label{def:asymPR}
Let $\mathcal{G}\subset\PTheta$ be the set of all possible priors. We say a posterior is asymptotically robust to prior misspecification w.r.t. $\mathcal{G}$ if the posterior is consistent for all priors $\Pi\in\mathcal{G}$. %
\end{definition} 
Classical results on Bayesian sensitivity analysis deal with small perturbations of some reference prior by some other measure through additive or multiplicative misspecification (see \cites{dey1994, gustafson1995}). That is some reference prior $\Pi$ is perturbed by another probability measure $G\in\PTheta$ through a constant $\varepsilon\in[0,1]$, e.g.
$\Pi_\varepsilon=(1-\varepsilon)\Pi+\varepsilon G$. Our results are distinct to this line of work as we deal with asymptotic inconsistency, and we will not use such explicit formulations for $\mathcal{G}$. We discuss this distinction in \cref{apx:classicPriorRobust}.

\subsection{Generalised Variational Inference}
The Bayesian posterior---consisting of the likelihood function of the data, $p_\theta(X_1^n)$, and a prior measure, $\Pi$---is the random measure
$$\Pi_nA:=\cfrac{\int_A p_\theta(X_1^n)\,\Pi(d\theta)}{\int_\Theta p_\theta(X_1^n)\,\Pi(d\theta)}= \cfrac{\int_A R_n(\theta)\,\Pi(d\theta)}{\int_\Theta R_n(\theta)\,\Pi(d\theta)}$$
as usual. It is immediate that we may replace $p_\theta(X_1^n)$ by $R_n(\theta):=p_\theta(X_1^n)/p_0(X_1^n)$ without changing $\Pi_n$ to see why we consider $h(\theta)$ above. In particular, under mild conditions, the generalised Shannon--MacMillan--Breiman theorem \cite{algoet1988} guarantees that $n^{-1}\log R_n(\theta)\rightarrow-h(\theta)$ (a.s.).
If we instead replace the likelihood, $p_\theta(X_1^n)$, by the exponential of a negative loss acting on the data, that is by $\exp\{-nL(X_1^n,\theta)\}$, we get Generalised Bayesian Inference (GBI) \cites{bissiri2016,miller2021}, or a Gibbs posterior \cite{alquier2016}. If we discard the prior, we may instead get an energy--based, frequentist model. 

As shown in \cite{zellner1988} Bayes may be equivalently formulated as an optimisation problem where $\Pi_n$ is the measure minimising the `Evidence Lower Bound'---the expectation of the negative log--likelihood, and the Kullback--Leibler divergence to the prior---over all probability measures acting on the hypothesis space. 
Tangentially, minimising a divergence other than the KL divergence to the Bayesian posterior may result in nicer optimisation problems in practice, %
this however does not nicely decompose into an expectation acting on the loss and a divergence measure acting on the prior, conflating model and prior misspecification; see also section 2.3.2 in \cite{jeremias2022}. GVI instead replaces the negative log--likelihood by any loss function, similar to GBI, as well as the KL divergence by any divergence, generalising these previous approaches. Typical choices of loss functions are those that either offer attractive computational benefits, and those that incorporate the hypothesis measure robustly, including those based on robust divergences; see \cite{mildner2025}.

Having introduced the necessary assumptions in \cref{sec:asp} we are able to establish that GVI posteriors over arbitrary Polish spaces minimising $\nu\mapsto T_n(\nu)$ exist and, under further assumptions, are also unique, extending the previous results of Knoblauch \cite{knoblauch2021} and Wild et al. \cite{wild2023}. In particular, we allow for $\Theta$ to be an infinite dimensional Polish space, rather than a finite dimensional, complete, and separable normed space. Far less do we require for $\mathcal{Q}$ to be convex; this is only required to establish uniqueness.

\begin{theorem}\label{thm:existenceGVI}
Make \cref{asp:loss,asp:lscL,asp:lscD,asp:finite}. Suppose also that $\mathcal{Q}$ is closed with respect to weak$^\star$ convergence. Then a GVI posterior minimising \cref{eqn:gvi_min} exists. 

If it further holds that the divergence is strictly convex in its first argument and that $\mathcal{Q}$ is convex, then there exists a unique minimiser to the functional $\nu\mapsto T_n(\nu)$.
\end{theorem}
Establishing the existence of GVI posterior measures provides that the GVI objective is in fact well specified and a reasonable framework for decision makers in the nonparametric setting.

\section{Main Results}\label{sec:results}
By \cref{thm:existenceGVI} we know that the sequence of GVI posteriors exist, so we may study their asymptotic behaviour. In key difference to previous work on (generalised) Bayesian posteriors \cites{miller2021, kleijn2006, shalizi2009} we do not require explicit knowledge of the GVI posterior. Furthermore, when comparing our results to the frequentist consistency result of Knoblauch \cite{knoblauch2021}, the assumptions on the loss (and the data) are far less strict, and we do not require $\Theta$ to be a finite dimensional separable normed space. While the bounded divergence is a stricter requirement this additionally allows for robustness to prior misspecification as in \cref{def:asymPR} and, given the right choice of divergence, hold for all priors $\Pi\in\PTheta$. 

To reason about the asymptotics of the sequence of GVI posterior measures we need that these exist, so we suppose in the remaining that $\mathcal{Q}$ is closed with respect to weak$^\star$ convergence. 
A key challenge here is that we require for the family $\mathcal{Q}$ to be rich enough, that is the measures that we desire to concentrate on exist within $\mathcal{Q}$ a priori. Naturally, this challenge is non--existent in consistency results of Bayesian posteriors as $\mathcal{Q}=\PTheta$, we however need to be able to deal with subsets of $\PTheta$. 
We solve this issue by requiring for $\mathcal{Q}$ to allow for Dirac measures; by this we mean that $\mathcal{Q}$ is closed with respect to the weak$^\star$ convergence of probability measures, and that for all $n$ sufficiently large it holds $P_0$--a.s. that $\inf_{\nu\in\mathcal{Q}}\,J_{L_n}(\nu)=L_n(\Theta)$. Recall that we defined $L_n(A)\equiv\inf_{\theta\in A} L_n(\theta)$ in \cref{eqn:infDef}.
\begin{theorem}\label{thm:asymptotics}
Make \cref{asp:loss,asp:finite,asp:bdd,asp:lscL,asp:lscD,asp:L-ae}. Suppose further that $\mathcal{Q}$ allows for Dirac delta measures, and that $\forall A\in\mathcal{T}$ for which $L(A)>L(\Theta)$ we almost surely have that
$$
\liminf_nL_n(A)>L(\Theta)\,.
$$
Then for any such $A\in\mathcal{T}$ it $P_0$--a.s. holds that $Q_nA\rightarrow0$.

Moreover, suppose there exists a unique $\theta^\star\in\Theta$ minimising $L(\theta)$, then $$Q_n\overset{{w}^\star}{\longrightarrow}\delta_{\theta^\star}\,$$ $P_0$--a.s.
\end{theorem}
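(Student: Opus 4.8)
The plan is to extract everything from \cref{thm:set}, which already confines $Q_n$ to the set $\mathcal{R}_n^\star$. The first move is to rewrite the containment $Q_n\in\mathcal{R}_n^\star$ as the quantitative bound
\[
\int_\Theta L(X_1^n,\theta)\,Q_n(d\theta)\;\le\;J_{L_n}(P_n^\star)+\frac M{n\beta}\;=\;L_n(\Theta)+\frac M{n\beta},
\]
which holds $P_0$--a.s. for all $n$ sufficiently large: the inequality is precisely the defining property of $\mathcal{R}_n^\star$ rewritten via $J_{L_n}(Q)=\int_\Theta L(X_1^n,\theta)\,Q(d\theta)$, while the equality combines \cref{asp:mle} (so that $J_{L_n}(P_n^\star)=\inf_{\nu\in\mathcal{Q}}J_{L_n}(\nu)$) with the hypothesis that $\mathcal{Q}$ allows for Dirac measures (so that this infimum equals $L_n(\Theta)$ eventually, $P_0$--a.s.).

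For the first conclusion, fix $A\in\mathcal{T}$ with $L(A)>L(\Theta)$. Splitting the integral over $A$ and $\Theta\setminus A$ and using $L(X_1^n,\theta)\ge L_n(A)$ on $A$ and $L(X_1^n,\theta)\ge L_n(\Theta)$ on $\Theta\setminus A$ (both bounds finite by \cref{asp:loss}), the displayed inequality becomes $L_n(A)\,Q_nA+L_n(\Theta)\,(1-Q_nA)\le L_n(\Theta)+M/(n\beta)$, i.e.
\[
\bigl(L_n(A)-L_n(\Theta)\bigr)\,Q_nA\;\le\;\frac M{n\beta}.
\]
The standing hypothesis gives $\liminf_n L_n(A)>L(\Theta)$ $P_0$--a.s., whereas \cref{asp:L-ae} gives $\limsup_n L_n(\Theta)\le L(\Theta)$; hence $\liminf_n\bigl(L_n(A)-L_n(\Theta)\bigr)\ge c$ for some $c>0$, and therefore $Q_nA\le M/(n\beta c)\to0$. (This already yields an $O(n^{-1})$ rate for a fixed $A$; the sharper statement of \cref{thm:rates} will require letting $A$ shrink with $n$.) Intersecting the relevant $P_0$--null sets---those from \cref{asp:L-ae}, from the Dirac hypothesis, and the one the standing hypothesis attaches to $A$---delivers the claim.

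For the second conclusion, apply the first part to $A=N_\varepsilon^c=\{\theta\in\Theta:L(\theta)>L(\Theta)+\varepsilon\}$, which satisfies $L(N_\varepsilon^c)\ge L(\Theta)+\varepsilon>L(\Theta)$; this yields $Q_nN_\varepsilon\to1$ $P_0$--a.s. for every $\varepsilon>0$. It remains to show that the nested family $\{N_\varepsilon\}_{\varepsilon>0}$ collapses topologically onto $\{\theta^\star\}$, i.e. that every open neighbourhood $U$ of $\theta^\star$ contains some $N_\varepsilon$. Granting this and choosing a countable base of neighbourhoods of $\theta^\star$ (available since $\Theta$ is Polish), we get $Q_nU\to1$ for every open $U\ni\theta^\star$ along an intersection of countably many almost--sure events, which is exactly $Q_n\overset{{w}^\star}{\longrightarrow}\delta_{\theta^\star}$. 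The topological collapse is where the real work lies: lower semicontinuity of $L$ makes each $N_\varepsilon$ closed, and if $L$ is additionally coercive then $N_1$ is compact, so any putative sequence $\theta_k\in N_{1/k}\setminus U$ would have a subsequential limit $\theta_\infty\in N_1\setminus U$ with $L(\theta_\infty)\le\liminf_kL(\theta_k)\le L(\Theta)$, forcing $\theta_\infty=\theta^\star$, a contradiction.

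I expect this last point to be the main obstacle. Lower semicontinuity of $L$ alone---all that \cref{asp:L-ae} supplies for the limiting loss---does not rule out a closed set avoiding $\theta^\star$ whose infimal loss still equals $L(\Theta)$, in which case $w^\star$ convergence to $\delta_{\theta^\star}$ genuinely fails; so the argument must invoke coercivity of $L$, either inherited from the coercivity of the $L_n$ in \cref{asp:lscL} through the convergence in \cref{asp:L-ae}, or assumed outright. A secondary, routine nuisance is the sign bookkeeping in the integral splitting when the losses are merely bounded below rather than nonnegative, together with the careful reading of the ``$P_0$--a.s., for any such $A$'' quantifiers.
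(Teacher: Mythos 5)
Your first part is, in substance, the paper's own argument: the paper routes the key inequality through its concentration lemma, obtaining $Q_nN_\varepsilon\bigl(L_n(N_\varepsilon)-L_n(\Theta)\bigr)+Q_nN_\varepsilon^c\bigl(L_n(N_\varepsilon^c)-L_n(\Theta)\bigr)\le M/(n\beta)$ and then enclosing $A\subset N_\varepsilon^c$ for $\varepsilon=\tfrac12(L(A)-L(\Theta))$, while you apply the same two-set decomposition directly to $A$ and $A^c$; both arguments rest on $Q_n\in\mathcal{R}_n^\star$, on $J_{L_n}(P_n^\star)=\inf_{\nu\in\mathcal{Q}}J_{L_n}(\nu)=L_n(\Theta)$ via \cref{asp:mle} together with the Dirac hypothesis, and on $\limsup_nL_n(\Theta)\le L(\Theta)$ from \cref{asp:L-ae}, so this portion is correct and essentially identical. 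For the moreover part you genuinely diverge: the paper approximates $f\in C_b(\Theta)$ uniformly by a simple function over a finite partition $(A_j)$ chosen so that the cell $A_k\ni\theta^\star$ has small $f$-oscillation and every other cell satisfies $L(A_j)>L(\Theta)$, then applies the first part cellwise, whereas you show $Q_nU\to1$ for every open $U\ni\theta^\star$ by arguing that $U$ contains some $N_\varepsilon$ and then use the standard neighbourhood characterisation of weak$^\star$ convergence to a point mass. These are equivalent in substance, and the obstacle you flag is exactly where the two meet: the paper's ability to ``pick'' such a partition is precisely your topological-collapse condition that complements of small neighbourhoods of $\theta^\star$ have infimal limiting loss strictly above $L(\Theta)$, and neither lower semicontinuity of $L$ (all that \cref{asp:L-ae} gives) nor coercivity of the $L_n$ from \cref{asp:lscL} (which need not survive the pointwise a.s.\ limit) delivers it for free. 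So your caveat is not a deficiency of your route relative to the paper's --- the paper's proof uses the same property tacitly in its partition choice --- and your neighbourhood argument, with coercivity of $L$ (or tightness of the sequence $(Q_n)$) stated explicitly, is a complete and arguably cleaner proof of the weak$^\star$ convergence; your additional observation that the fixed-$A$ bound already gives $Q_nA=O(n^{-1})$ is consistent with, and anticipates, \cref{thm:rates}.
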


This result implies concentration of the GVI posterior on a subset of $\Theta$ on which the converged loss $L$ is minimised.  In fact we may pick $A$ such that we concentrate on its complement, in particular $A^c=N_\varepsilon:=\{\theta:L(\theta)\le L(\Theta)+\varepsilon\}$ for any $\varepsilon>0$.

In the moreover part we show the weak$^\star$ ($w^\star$) convergence of $(Q_n)$ to $\delta_{\theta^\star}$, which we may interpret as the expected loss minimiser of $L(\theta)$, i.e. $P^\star_\infty$. The extension to infinite dimensional Polish spaces and the weakening of assumptions make this is a considerable improvement over the previous result on weak$^\star$ convergence in \cite{knoblauch2021}.

As our main result of the paper, we are able to derive a rate of convergence result in the number of observations $n$. In fact, we may pick a non--increasing sequence $\varepsilon_n$ decaying slowly enough such that the concentration still holds on sets $N_{\varepsilon_n}:=\{\theta\in\Theta:L(\theta)\le L(\Theta)+\varepsilon_n\}$.
\begin{theorem}\label{thm:rates}
Make \cref{asp:loss,asp:finite,asp:bdd,asp:lscL,asp:lscD,asp:L-ae}, and suppose $\mathcal{Q}$ allows for Dirac delta measures. 
Pick a positive sequence $\varepsilon_n$ such that $\varepsilon_n\rightarrow0$ and $n\varepsilon_n\rightarrow \infty$. 
If  $$L_n(N_{\varepsilon_n}^c)-L_n(\Theta)\gtrsim\varepsilon_n$$
eventually almost surely, then
\begin{equation}
	Q_nN_{\varepsilon_n}\rightarrow 1  %
\end{equation}
$P_0$--a.s.
\end{theorem}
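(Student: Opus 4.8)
The plan is to obtain the rate from a two-sided estimate on the empirical loss functional $J_{L_n}(Q_n)$. From above, \cref{thm:set} confines $Q_n$ to the set $\mathcal{R}_n^\star$; from below, splitting the integral that defines $J_{L_n}(Q_n)$ over $N_{\varepsilon_n}$ and its complement produces a term proportional to $1-Q_nN_{\varepsilon_n}$ multiplied by the ``loss gap'' $L_n(N_{\varepsilon_n}^c)-L_n(\Theta)$. Matching the two bounds and invoking the hypothesis on the loss gap will force $1-Q_nN_{\varepsilon_n}$ to be of order $(n\varepsilon_n)^{-1}$, which vanishes by the choice of $\varepsilon_n$.

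First I would record the upper bound. By \cref{thm:set} (whose hypotheses \cref{asp:loss,asp:finite,asp:bdd,asp:mle} are in force here) the GVI posterior satisfies $Q_n\in\mathcal{R}_n^\star$, that is $J_{L_n}(Q_n)-J_{L_n}(P_n^\star)\le M/(n\beta)$. Because $\mathcal{Q}$ allows for Dirac delta measures, for all $n$ sufficiently large it holds $P_0$-a.s.\ that $\inf_{\nu\in\mathcal{Q}}J_{L_n}(\nu)=L_n(\Theta)$, and by \cref{asp:mle} this infimum is attained at $P_n^\star$, so $J_{L_n}(P_n^\star)=L_n(\Theta)$ and hence, eventually almost surely,
\[
J_{L_n}(Q_n)\;\le\;L_n(\Theta)+\frac{M}{n\beta}\;<\;\infty .
\]

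Next the lower bound. Since $J_{L_n}(Q_n)$ is finite and $L_n$ is lower bounded by \cref{asp:loss}, the decomposition $J_{L_n}(Q_n)=\int_{N_{\varepsilon_n}}L_n\,dQ_n+\int_{N_{\varepsilon_n}^c}L_n\,dQ_n$ is legitimate, and using $L_n(\theta)\ge L_n(\Theta)$ for all $\theta$ together with $L_n(\theta)\ge L_n(N_{\varepsilon_n}^c)$ on $N_{\varepsilon_n}^c$,
\[
J_{L_n}(Q_n)\;\ge\;L_n(\Theta)\,Q_nN_{\varepsilon_n}+L_n(N_{\varepsilon_n}^c)\bigl(1-Q_nN_{\varepsilon_n}\bigr)\;=\;L_n(\Theta)+\bigl[L_n(N_{\varepsilon_n}^c)-L_n(\Theta)\bigr]\bigl(1-Q_nN_{\varepsilon_n}\bigr).
\]
Combining with the upper bound and cancelling the finite quantity $L_n(\Theta)$ (finite by \cref{asp:loss,asp:finite}), then using the hypothesis that $L_n(N_{\varepsilon_n}^c)-L_n(\Theta)\ge k\varepsilon_n$ for some $k>0$ and all $n$ large, eventually $P_0$-a.s.\ (intersected with the full-measure event on which the Dirac condition holds), we obtain for all sufficiently large $n$
\[
1-Q_nN_{\varepsilon_n}\;\le\;\frac{M}{k\beta}\cdot\frac{1}{n\varepsilon_n},
\]
with the degenerate case $L_n(N_{\varepsilon_n}^c)=+\infty$ forcing $1-Q_nN_{\varepsilon_n}=0$ outright. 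Since $n\varepsilon_n\to\infty$, the right-hand side tends to $0$, so $Q_nN_{\varepsilon_n}\to1$ $P_0$-a.s.

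The computation is short; the points that need care are entirely of a bookkeeping nature: confirming that $J_{L_n}(Q_n)$ is genuinely finite before splitting the integral and cancelling $L_n(\Theta)$ (so that possibly negative loss values cause no trouble), treating $L_n(N_{\varepsilon_n}^c)=+\infty$ separately, and making sure the two ``eventually almost surely'' statements --- the loss-gap hypothesis and the Dirac-measure condition --- are used on a common $P_0$-full-measure set, so the conclusion is a genuine almost-sure statement.
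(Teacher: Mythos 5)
Your proposal is correct and follows essentially the same route as the paper: both bound $J_{L_n}(Q_n)-J_{L_n}(P_n^\star)\le M/(n\beta)$ via \cref{thm:set}, identify $J_{L_n}(P_n^\star)=L_n(\Theta)$ through the Dirac-measure condition, split the loss integral over $N_{\varepsilon_n}$ and its complement, discard the nonnegative term on $N_{\varepsilon_n}$, and invoke the gap hypothesis $L_n(N_{\varepsilon_n}^c)-L_n(\Theta)\gtrsim\varepsilon_n$ together with $n\varepsilon_n\rightarrow\infty$ to force $Q_nN_{\varepsilon_n}^c\rightarrow0$. The additional bookkeeping you flag (finiteness before cancelling, the degenerate infinite-gap case, intersecting the almost-sure events) is sound and consistent with the paper's argument.
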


Significantly, this is the first result that establishes rates of convergence for Generalised Variational Inference posterior measures. 
Here we allow for sequences $\varepsilon_n$ that converge just slower than $n^{-1}$ which should be compared to the result of \cite{shalizi2009} who finds the same rates for Bayesian posteriors with highly correlated data using Kullback--Leibler neighbourhoods. In \cite{ghosal2007} the authors finds rates of order $n^{-1/2}$ for the Bayesian posterior using stronger definitions of $N_{\varepsilon_n}$ through the Hellinger metric.

Moreover, given that \cref{asp:bdd} applies, we find that the above theorems are independent of the choice of prior $\Pi\in\mathcal{G}$. This implies that we do not require $\Pi(N_{\varepsilon_n})>0$ for any $n\in\mathbb{N}$, given $\mathcal{G}=\PTheta$, and the GVI posterior may hence concentrate on hypotheses that minimise the converged loss, $L$, rather than being restricted to the support of the prior which may be misspecified.

There is no objective reason why the different inputs into the GVI objective should be fixed with respect to incoming observations. In fact, \cref{thm:asymptotics,thm:rates} remain valid even if $\beta$, $M$, $\Pi$, $L$, or $D$ vary with $n$. We simply require that the respective assumptions are satisfied for each $n$, but these may affect the rate of convergence.

\begin{theorem}\label{thm:n}
	\cref{thm:asymptotics,thm:rates} remain valid if $\Pi$, $\Theta$, $\P$, $\mathcal{Q}$, $L(\cdot,\cdot)$, $M$, $D(\cdot:\cdot)$, and $\beta$ depend on $n$ but satisfy the respective assumptions for each $n$. In particular, we require $M_n\beta_n^{-1}n^{-1}\rightarrow 0$, and that the sequence $\varepsilon_n$ satisfies $n\varepsilon_nM_n^{-1}\beta_n\rightarrow\infty$.
	
	Moreover, \cref{thm:asymptotics,thm:rates} hold regardless of the prior $\Pi\in\mathcal{G}$, and we may pick the divergence $D$ such that $\mathcal{G}=\PTheta$.
\end{theorem}

\begin{remark}
Notably, this allows for situations in which $Q_n$ and $\Pi$ are mutually singular, and hence whenever the prior is severely misspecified.
\end{remark}
As another extension of the preceding theorems, it is certainly permissible for the prior to put mass outside the space $\Theta$. In fact, we may even allow for its entire mass to be outside of $\Theta$, in which case the divergence is maximal. Such situations may arise when the space $\Theta$ is embedded inside some larger space $T$. Consider the case where $\Theta$ corresponds to some model class $\mathcal{P}\subset\mathcal{P}(\mathbb{R})$ and the prior is in fact over the entire space $\mathcal{P}(\mathbb{R})$.

\section{Finite Sample Bound and Extensions}\label{sec:extensions}
\subsection{A Finite Sample Bound}
Before we bound the objective of the GVI posterior under arbitrary priors $\Pi\in\mathcal{G}$, we define an empirical loss minimising measure corresponding to a minimiser of the functional $\nu\mapsto J_{L_n}(\nu)$.
\begin{definition}\label{asp:mle}
	We denote $P_n^\star\in\mathcal{Q}$ such that $\forall Q\in\mathcal{Q}$ it holds that $J_{L_n}(P_n^\star)\le J_{L_n}(Q)$. This is guaranteed to exist under \cref{asp:loss,asp:finite,asp:lscL} supposing that $\Q$ is weak$^\star$ closed.
\end{definition}
\begin{proposition}\label{thm:set}
	Under \cref{asp:loss,asp:finite,asp:bdd,asp:lscL}, for all priors $\Pi\in\mathcal{G}$, the respective GVI posteriors $Q_n$ of \cref{eqn:gvi_min} are in the set
	\begin{equation*}
		\mathcal{R}_n^\star:=\left\{
		Q\in\mathcal{Q}: \int_\Theta L(X_1^n,\,\theta)\left[Q(d\theta)-P_n^\star(d\theta)\right]\le \frac M{n\beta}
		\right\}.
	\end{equation*}
	Moreover, suppose that $\mathcal{Q}$ is closed with respect to weak$^\star$ convergence, then $\mathcal{R}^\star_n$ is compact with respect to weak$^\star$ convergence.
\end{proposition}
This result provides an instrumental role in proving the theorems of the preceding section.

\begin{remark}
\cref{asp:bdd} can also be satisfied by restricting the set of possible posterior measures, $\mathcal{Q}$, to the sublevel sets of an unbounded divergence to some prior:
$$\mathcal{Q}|_M:=\{\nu\in\mathcal{Q}:D(\nu:\Pi)\le M\}.$$
\cref{thm:set} may now be satisfied with $\mathcal{Q}|_M$ and $\mathcal{G}=\{\Pi\}$. This, however, does not guarantee robustness to prior misspecification in the same generality as using a bounded divergence, due to the set $\mathcal{G}$ from \cref{def:asymPR} being smaller. In fact, $\mathcal{Q}|_M$ implicitly makes assumptions on the absolute continuity of its members to the prior.
\end{remark}

\subsection{Generalisation Performance}
Let $P^n_0\equiv P_0(X_n|\sigma(X_1^{n-1}))$ be the conditional under the data generating process, and denote the posterior predictive $Q^n_{P_\theta}\equiv\int_\Theta P_\theta(X_n|\sigma(X_1^{n-1}))Q_n(d\theta)$ under the GVI posterior $Q_n$. It follows from \cite{shalizi2009} that the following result on the generalisation performance of GVI holds.
\begin{theorem}\label{thm:gen}
Let $L(X_1^n,\,\theta):=n^{-1} \log (p_0(X_1^n)/p_\theta(X_1^n))$, and suppose that the assumptions of \cref{thm:asymptotics} hold. Then it $P_0$--a.s. holds that
\begin{align*}
\limsup_{n\rightarrow\infty} d_H^2(P_0^n,Q^n_{P_\theta})&\le  L(\Theta) \\
\limsup_{n\rightarrow\infty} d_{tv}^2(P_0^n,Q^n_{P_\theta})&\le 4L(\Theta)
\end{align*}
where $d_H$ and $d_{tv}$ are respectively the Hellinger and total variation distances.
\end{theorem}
Note that this result holds independently of the choice of prior measure $\Pi\in\PTheta$, and $L(\Theta)$ corresponds to $h(\Theta)\equiv\inf_{\theta\in\Theta}h(\theta)$ from \cref{sec:prior_mis}.
\subsection{On Consistency of GVI Posteriors under Arbitrary Divergences}
For completeness, we also include a result on asymptotic consistency of infinite dimensional GVI posteriors under unbounded divergences. Unfortunately, this requires restrictive assumptions on the set $\mathcal{Q}$, and requires that the loss and the divergence are well behaved. Specifically, we require that a sequence of measures exists which concentrate around loss minimisers, and that eventually the contribution of the divergence of these measures is of order $\mathcal{O}(n^\alpha)$ for $\alpha<1$. Nevertheless, if this is satisfied we can conclude that the sequence of GVI posteriors concentrates on sets containing loss minimisers.
\begin{theorem}\label{thm:unboundedConcentration}
Make \cref{asp:loss,asp:finite,asp:lscL,asp:lscD,asp:L-ae}. Suppose also that there exists a sequence $(\nu_n)\subset\mathcal{Q}$, not necessarily GVI minimisers, for which (i) $(J_{L_n}(\nu_n)-L_n(\Theta))\rightarrow 0$ $P_0$--a.s., (ii) $n^{-1}D(\nu_n:\Pi)\rightarrow0$, and (iii) $\forall A\in\mathcal{T}$ for which $L(A)>L(\Theta)$ it almost surely holds that $\liminf_nL_n(A)>L(\Theta)$. Then, for all such sets $A$, $Q_nA\rightarrow0$ with $P_0$--probability 1.
\end{theorem}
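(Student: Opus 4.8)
The plan is to follow the route of the proof of \cref{thm:asymptotics}, with the uniform bound $M/(n\beta)$ on the divergence contribution used there replaced by the vanishing quantity $\tfrac1{n\beta}D(\nu_n:\Pi)+\bigl(J_{L_n}(\nu_n)-L_n(\Theta)\bigr)$, which tends to $0$ $P_0$--almost surely by hypotheses (i) and (ii). I would work throughout on the full--$P_0$--measure event on which (i), (iii), and the almost--sure part of \cref{asp:L-ae} all hold, intersecting these at the outset so that every limiting statement below holds simultaneously, and take $Q_n$ to be a minimiser of $T_n$ over $\mathcal{Q}$ (whose existence is implicit in the statement).

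First I would extract near--optimality of the expected loss under $Q_n$. Since $\nu_n\in\mathcal{Q}$ and $Q_n$ minimises $T_n$, we have $T_n(Q_n)\le T_n(\nu_n)$; dropping the non--negative term $\tfrac1\beta D(Q_n:\Pi)$ on the left and dividing by $n$ gives $J_{L_n}(Q_n)\le J_{L_n}(\nu_n)+\tfrac1{n\beta}D(\nu_n:\Pi)$. Subtracting $L_n(\Theta)$, which is finite by \cref{asp:loss}, the right-hand side is exactly the vanishing quantity above, while $J_{L_n}(Q_n)-L_n(\Theta)\ge 0$ because $L_n(\theta)\ge L_n(\Theta)$ pointwise and $Q_n$ is a probability measure. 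Hence $J_{L_n}(Q_n)-L_n(\Theta)\to 0$ $P_0$--a.s.

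Next I would localise this on the fixed set $A$. Splitting $J_{L_n}(Q_n)=\int_A L_n\,dQ_n+\int_{A^c}L_n\,dQ_n$ and bounding $L_n$ below by the constant $L_n(A)$ on $A$ and by $L_n(A^c)\ge L_n(\Theta)$ on $A^c$ — each such extraction of a constant lower bound from an integral being valid irrespective of the sign of $L_n$ — yields $J_{L_n}(Q_n)\ge L_n(A)\,Q_nA+L_n(\Theta)(1-Q_nA)$, i.e.\ $J_{L_n}(Q_n)-L_n(\Theta)\ge\bigl(L_n(A)-L_n(\Theta)\bigr)Q_nA$. To close the argument I would show the coefficient is eventually bounded below: by (iii) $\liminf_n L_n(A)>L(\Theta)$ while \cref{asp:L-ae} gives $\limsup_n L_n(\Theta)\le L(\Theta)$, so superadditivity of $\liminf$ yields $\liminf_n\bigl(L_n(A)-L_n(\Theta)\bigr)\ge\liminf_n L_n(A)-L(\Theta)>0$; fixing $\delta>0$ below this value, eventually $L_n(A)-L_n(\Theta)\ge\delta$ a.s., and combining the two displays gives $Q_nA\le\delta^{-1}\bigl(J_{L_n}(Q_n)-L_n(\Theta)\bigr)\to 0$ $P_0$--a.s., as claimed.

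The argument is essentially a rerun of \cref{thm:asymptotics}, and I do not anticipate a genuine obstacle: the only points needing care are the bookkeeping of the three almost--sure events and the sign--agnostic handling of $L_n$ in the integral split. Indeed, the substantive work — constructing an approximating sequence whose divergence to the prior grows sublinearly — has been shifted into hypotheses (i) and (ii), so that relative to \cref{thm:asymptotics} this result merely trades a concrete construction (Dirac measures together with a bounded divergence) for an abstract one.
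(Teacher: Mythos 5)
Your proposal is correct and follows essentially the same route as the paper's proof: compare $T_n(Q_n)\le T_n(\nu_n)$, discard the non--negative divergence term of $Q_n$, split $J_{L_n}(Q_n)$ over $A$ and $A^c$, and use hypothesis (iii) together with $\limsup_n L_n(\Theta)\le L(\Theta)$ from \cref{asp:L-ae} to force $Q_nA\rightarrow0$. The only difference is cosmetic bookkeeping: you squeeze the gap $J_{L_n}(Q_n)-L_n(\Theta)$ directly, needing only $\limsup_n L_n(\Theta)\le L(\Theta)$, whereas the paper phrases the same squeeze through $J_{L_n}(Q_n)\rightarrow L(\Theta)$ and $L_n(A^c)\rightarrow L(\Theta)$.
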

Here, in key difference to \cref{thm:asymptotics}, we require an explicit choice of prior $\Pi$, and while this result allows us to reason about the concentration of arbitrary GVI posteriors on infinite dimensional Polish spaces $\Theta$, it unfortunately tells us nothing about the rate of convergence.

\section{Illustrative Example}\label{sec:examples}

\subsection{Mitigating Inconsistent Bayes Posteriors}
\cref{thm:asymptotics,thm:n} suggest that consistency holds independent of the choice of prior. Hence, we may mitigate inconsistency of Bayes posteriors through GVI with bounded divergences. To illustrate this, we follow the example of Diaconis and Freedman \cite{diaconis1986b} in modelling a location parameter $\theta_0$ given independent errors $\v_i$. We refer to their model as DF.
\begin{equation}\label{expl:DF_model}
X_i= \theta_0 +\v_i
\end{equation}
for some $\v_i$ independent with unknown distribution $F_0$. In particular, 
they suppose that it has some density $h_0\in C_c^\infty(\mathbb{R})$ symmetric about 0 which is a strict global maximum. DF model the prior for $\theta_0$ and $F_0$ independent, $\theta\sim\mathcal{N}(0,1)$ and $F\sim D(\alpha)$ the Dirichlet process prior based on the standard Cauchy. 

Notably, DF prove that there exist $h_0$, symmetric about $\theta_0$, for which it $P_0$--a.s. holds that the Bayes posterior does not concentrate on $\theta_0=0$ but rather around $\pm\gamma$ for $\gamma>0$ with equal probability. In particular, DF demonstrate $\forall\eta>0$ it holds that
$\limsup \Pi_n\{(\theta,F): |\theta-\gamma|<\eta\}=1\, a.e.$ and $\limsup \Pi_n\{(\theta,F): |\theta+\gamma|<\eta\}=1\, a.e.$ That is the posterior oscillates around $+\gamma$ and $-\gamma$. In fact, DF show that this holds even if we forgo identifiability issues associated with $\theta$ and $F$ by considering only symmetric densities $G$ given a symmetrised Dirichlet prior.

For our purposes, we consider the location parameter $\theta\in\R$ to have corresponding Dirac measure $\delta_\theta$, and consider $G$ to be in the set of symmetric probability measures. That is $G\in\overline{\P}:=\{\frac12(\nu+\nu^-):\nu\in\P(\R), \nu^-:=\nu(-dx)\}$, which are mean zero, symmetric measures. We consider the set of models to be their convolution $\delta_\theta\star G$, that is $\{\delta_\theta\star G:\theta\in\R,G\in\overline{\P}\}$ with $\Theta=\R\times\overline{\P}$.

We now demonstrate in \cref{prop:DF} that GVI updating with bounded divergences results in consistent posteriors.
\begin{proposition}\label{prop:DF}
Suppose the model is given as \cref{expl:DF_model} with $\v_i$ independent, $h_0\in C^\infty_c$ symmetric about zero and such that the Bayes posterior is inconsistent, and $\theta_0=0$ as in \cite[Theorem 3]{diaconis1986b}.
Let $\pi(d\theta,dG)$ be as above, $L(X_1^n,\,(\theta,G))$ the average negative log likelihood, and $\mathcal{Q}=\PTheta$ for $\Theta=\mathbb{R}\times\overline{\P}$. Then, under any divergence satisfying \cref{asp:lscD,asp:bdd}, the GVI posteriors will be consistent.

Moreover, it is $P_0$--a.s. that $\forall\eta>0$
$$\lim_{n\rightarrow\infty} Q_n\{\theta: |\theta|<\eta\}=1 \,\,a.e.$$
\end{proposition}
We give explicit examples of divergences satisfying both \cref{asp:lscD,asp:bdd} in \cref{sec:divs}. Note also that there exists an equivalent $N_\varepsilon$ to $N_\eta:=\{\theta\in\mathbb{R}:|\theta|<\eta\}$.
\begin{proof}
We verify the assumptions of \cref{thm:asymptotics}. By \cite[Thm. 17.23, Prop. 3.3]{kechris1995} we know that $\Theta=\mathbb{R}\times\overline{\P}$ is Polish. \cref{asp:bdd,asp:lscD} are satisfied by assumption, and that $L(X_1^n,\,\theta)$ are $\sigma(X_1^n)\times\mathcal{T}$ measurable is immediate, hence \cref{asp:loss} holds. Since $\mathcal{Q}=\PTheta$, we trivially include Dirac measures and \cref{asp:finite} holds.

We find it more intuitive to work with a log likelihood ration in the remainder rather than the negative log likelihood. This is equivalent through addition of a constant term depending on $n$ without affecting any minimisers. 
Let
$$
\begin{aligned}
L(X_1^n, (\theta,g)) :=-\frac{1}{n}\log\frac{p(X_1^n|\theta,g)}{p_0(X_1^n)}\rightarrow &\mathbb{E}_{P_0}\left[-\log\frac{p(X|\theta,g)}{p_0(X)}\right] \\&=D_{KL}(P_0:\delta_\theta\star G)=:L(\theta,g).
\end{aligned}
$$
As $p_0(X_1^n)\in(0,\infty)$ almost surely and $-n^{-1}\log p(X_1^n|\theta,g)\in [0,\infty]$, the losses are lower bounded (a.s.). These converge pointwise $P_0$--a.s. by the Asymptotic Equipartition Property as $P_0$ is stationary and ergodic under the usual shift \cite{algoet1988}. In particular, as the model is well specified, we may pick $\theta=\theta_0$ and $g\equiv h_0$ to achieve $L(X_1^n, (\theta_0,h_0))=0$ for each $n$, and hence $$\limsup_n\inf_{(\theta,g)\in\Theta}L(X_1^n,(\theta,g))\le 0\le \inf_{(\theta,g)\in\Theta}L(\theta,g).$$ We therefore satisfy \cref{asp:L-ae}.
Since we consider symmetric densities $g$, $h_0$ is continuous, and the space of bounded continuous functions is dense in $L^1(m)$, we may restrict ourselves to continuous densities. It follows that the convolution $p(X_i|\theta,g)=\delta_\theta\star g(X_i)$ is continuous and coercivity is immediate as $p(X_1^n|\theta,g)$ vanishes at the boundary. \cref{asp:lscL} is thereby satisfied.

It remains to be shown that $\forall A\subset\Theta$ for which $L(A)>L(\Theta)$ it holds that $\liminf L_n(A)>L(\Theta)$. As the model is well specified $L(\Theta)=0$ which holds iff $P_0=P_\theta$ in the sense of measures. Note that the set $N_\eta^c$ is such a set as $h_0(\theta)$ has a strict global maximum at 0. For the conclusion it is sufficient to verify the condition for all $\eta>0$. Note $p(X_1^n|\theta,g)=\prod_{i=1}^ng(X_i-\theta)$. Now since the pairs $(\theta,g)$ are symmetric densities centred at $\theta$, and restricting $|\theta|\ge \eta$ it is immediate that the condition holds, as eventually $P_0$--a.s. for all $n$ sufficiently large $p_0(X_1^n)>\prod_ig(X_i-\theta)$ for a.e. $\theta\in N_\eta$.
\end{proof}

\subsection{The Choice of Divergence}\label{sec:divs}
Many choices of divergence satisfying \cref{asp:bdd,asp:lscD} exist.
The prime example of such divergences, that moreover satisfies \cref{asp:bdd} for unrestricted choices of $\mathcal{Q}$ and $\mathcal{G}$, is the Total Variation Distance defined as:
\begin{equation}
D_{TV}(Q:\Pi):=\sup_{A\in\mathcal{T}}|QA-\Pi A|\le 1.
\end{equation}
Since $Q$ and $\Pi$ are probability measures, which are by definition non--negative, the upper bound is achieved for instance in the case when $Q$ and $\Pi$ are mutually singular measures.

As an immediate consequence, the Hellinger Distance is also bounded above. By a well known result, see for instance \cite[Lemma B.5]{pinski2015},
\begin{equation}
D_H^2(Q:\Pi)=\frac12\int_\Theta\left(\sqrt{\frac{dQ}{d\mu}(\theta)}-\sqrt{\dfrac{d\Pi}{d\mu}(\theta)}\right)^2\mu(d\theta)\le D_{TV}(Q:\Pi).
\end{equation}

Furthermore, such a divergence can be constructed as part of the class of $f$--divergences \cites{ali1966, amari2016}. We follow \cite{amari2016} and define these as
\begin{equation*}
 D_f(Q:\Pi)=\mathbb{E}_{Q}\left[f\left(\frac{d\Pi}{dQ}\right)\right]
\end{equation*}
for some convex function $f\in C^1(\mathbb{R}_{\ge0},\mathbb{R})$, such that $f(1)=0$.
This is a vast class of divergences and contains for instance the KL and the $\chi^2$ divergences.

Making use of a result in \cite{amari2016}, we know that for $f$--divergences, there is an upper bound whenever the following limit exists and is finite:
\begin{equation}
    0\le D_f(Q:\Pi)\le\lim_{u\rightarrow0^+}\left\{f(u)+ uf\left(\frac1u\right)\right\}
\end{equation}
Note that this is independent of any choice of probability measures $Q$ and $\Pi$, and solely reliant on the function $f$.

For instance, we can pick the Le Cam $f$--divergence, which uses 
$$
f(u)=\frac{(u-1)^2}{u+1}
$$
for which this limit exists and, through basic analysis, is 2.

Furthermore, \cref{thm:n} for instance allows for the following divergence:
\begin{equation*}
	D_{TV}^{(M\sqrt{n})}(Q:\Pi):=n^{1/2}M||Q-\Pi||_{TV},
\end{equation*}
where $M>0$ is some sufficiently large constant, preventing over--concentration around the data in cases where the prior is not severely misspecified while maintaining the results on rates of convergence. We may hence achieve an interplay between Bayesian and frequentist inference.
\section{Proofs}\label{sec:proofs}
\subsection{Existence and Uniqueness of a Generalised Variational Inference Posterior}\label{sec:exists}

\begin{lemma}\label{lem:tightnessJ}
	Make \cref{asp:loss,asp:finite,asp:lscL}. Suppose $\mathcal{Q}$ is closed with respect to weak$^\star$ convergence.
	Pick any $C\in\mathbb{R}$, then the subspace $\mathcal{A}:=\{\nu\in\mathcal{Q}:J_{L_n}(\nu)\le C\}$ is tight. 
\end{lemma}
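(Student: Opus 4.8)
The plan is to establish tightness directly from the definition: for every $\varepsilon>0$ I will exhibit a single compact set $K_\varepsilon\subset\Theta$ such that $\nu(K_\varepsilon)\ge 1-\varepsilon$ for all $\nu\in\mathcal{A}$ simultaneously. The set $K_\varepsilon$ will be a sublevel set of the loss $L_n$, which is compact by \cref{asp:lscL} (coercivity gives relative compactness, lower semi-continuity gives closedness), and the uniform lower bound on $\nu(K_\varepsilon)$ will follow from Markov's inequality applied to $L_n$, using that $J_{L_n}(\nu)\le C$ with $C$ not depending on $\nu$.

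\textbf{Setup.} First I would use \cref{asp:loss} to fix $m:=\inf_{\theta\in\Theta}L_n(\theta)>-\infty$, so that $g:=L_n-m$ is a non-negative, lower semi-continuous, coercive function on the Polish space $\Theta$ by \cref{asp:lscL}, and $\int_\Theta g\,d\nu=J_{L_n}(\nu)-m\in[0,\infty]$ is well defined for every $\nu\in\PTheta$. If $\mathcal{A}=\emptyset$ the statement is vacuous; otherwise (which is guaranteed for $C$ large enough by \cref{asp:finite}) every $\nu\in\mathcal{A}$ satisfies $\int_\Theta g\,d\nu\le C-m<\infty$.

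\textbf{Main estimate.} Now fix $\varepsilon>0$ and set $\lambda:=(C-m)/\varepsilon$ (the case $C=m$ forces every $\nu\in\mathcal{A}$ to be supported on the compact set $\{L_n=m\}$ and is trivial). Markov's inequality yields, uniformly in $\nu\in\mathcal{A}$,
$$
\nu\bigl(\{\theta\in\Theta:g(\theta)>\lambda\}\bigr)\le\frac1\lambda\int_\Theta g\,d\nu\le\frac{C-m}{\lambda}=\varepsilon\,,
$$
so that $\nu(K_\varepsilon)\ge 1-\varepsilon$ for $K_\varepsilon:=\{\theta\in\Theta:L_n(\theta)\le m+\lambda\}$. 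Coercivity of $L_n$ makes $K_\varepsilon$ relatively compact and lower semi-continuity of $L_n$ makes it closed, hence $K_\varepsilon$ is compact. Since $\varepsilon>0$ was arbitrary, $\mathcal{A}$ is tight.

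\textbf{Main obstacle.} I do not expect a substantive difficulty here; the argument is a standard Markov-plus-coercivity estimate. The only points needing care are that the Markov bound is uniform over $\mathcal{A}$ (which it is, since $C$ is independent of $\nu$) and that coercivity together with lower semi-continuity upgrades the relatively compact sublevel set to a genuinely compact one. It is worth noting that the weak$^\star$ closedness of $\mathcal{Q}$ is not used to prove tightness itself; it appears in the hypotheses because tightness will subsequently be combined with Prokhorov's theorem and this closedness to obtain weak$^\star$ compactness of the relevant sets (as in \cref{thm:set}).
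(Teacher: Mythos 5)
Your proof is correct and is essentially the paper's argument: the same uniform estimate with threshold $(C-\inf_\theta L_n(\theta))/\varepsilon$, with coercivity supplying the compact set outside of which the loss exceeds that threshold, and with the constraint $J_{L_n}(\nu)\le C$ bounding the escaping mass uniformly over $\mathcal{A}$. The only cosmetic differences are that you argue directly via Markov's inequality (taking the closed, hence compact, sublevel set as $K_\varepsilon$) whereas the paper argues by contraposition using the coercivity-provided compact set, and you correctly note, as the paper's proof implicitly does, that weak$^\star$ closedness of $\mathcal{Q}$ plays no role in the tightness itself.
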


\begin{proof}
Fix $\varepsilon>0$, and we show that $\exists K\subset\Theta$ compact, depending only on $\varepsilon$ and the loss, such that $\forall\nu\in\mathcal{A}$ it holds $\nu K>1-\varepsilon$.

We may decompose the functional, for a compact set $K\subset\Theta$ to be specified later, into
$$
J_{L_n}(\nu)=\int_KL_n\,d\nu+\int_{K^c}L_n\,d\nu\ge \nu K\inf_{\theta\in K}L_n(\theta)+\nu K^c\inf_{\theta\in\Theta\backslash K}L_n(\theta)\,.
$$
As the loss is lower bounded and lower semi--continuous it achieves a minimum on any compact set, so $\inf_{\theta\in K}L_n(\theta)=c\in\mathbb{R}$ is achieved for some $\theta$ and exists, and hence by the lower bound, there exists some global minimum $c^\star:=\inf_{\theta\in\Theta} L_n(\theta)$. Now, take $c^\star\wedge0:=\min\{c^\star,0\}$, so that
$$
\nu K\inf_{\theta\in K}L_n(\theta)+\nu K^c\inf_{\theta\in\Theta\backslash K}L_n(\theta)\ge (c^\star\wedge0)+\nu K^c\inf_{\theta\in\Theta\backslash K}L_n(\theta)\,.
$$
Suppose the contrapositive that $\mathcal{A}$ is not tight, then we show that the hypothesis $J_{L_n}(\nu)\le C$ also fails. In particular, this implies $\nu K^c\ge\varepsilon$, so 
$$
J_{L_n}(\nu)\ge (c^\star\wedge0)+\varepsilon\inf_{\theta\in\Theta\backslash K}L_n(\theta)\,.
$$
And as the set $[c^\star\wedge0, \frac{C-(c^\star\wedge0)}{\varepsilon}]\subset\mathbb{R}$ is compact, by the coerciveness of the loss, there exists $K'\subset\Theta$ compact such that $L_n:\Theta\backslash K'\rightarrow\mathbb{R}\backslash[c^\star\wedge0, \frac{C-(c^\star\wedge0)}{\varepsilon}]$, in words there exists some compact set $K'$ such that the loss defined on its complement maps $\theta$ exclusively outside of $[c^\star\wedge0, \frac{C-(c^\star\wedge0)}{\varepsilon}]$. Note that $K'$ does not depend on $\nu$.
We may now let $K=K'$ to see that if $\mathcal{A}$ is not tight then $\exists\nu\in\mathcal{A}$ for which $J_{L_n}(\nu)>C$.
As $\varepsilon$ was arbitrary, the claim follows.
\end{proof}

\begin{proof}[Proof of \cref{thm:existenceGVI}]
As the loss is lower bounded, we may let this constant be $c:=\inf_{\theta}L(X_1^n,\,\theta)$. Suppose without loss of generality that this constant is negative. Then we claim that $\forall C>c$ the sublevel sets of the GVI functional $T_n(\cdot)$ are weak$^\star$ compact. As the loss and the divergence are lower semi--continuous, it follows that $\nu\mapsto J_{L_n}(\nu)$ is lower semi--continuous and so $\nu\mapsto T_n(\nu):= J_{L_n}(\nu)+D(\nu:\Pi)$ is lower semi--continuous \cites{ambrosio2008,dembo2010}. Hence, by Prokhorov's theorem we need to show that sublevel sets are tight \cite{billingsley1999}. This is immediate if $\Theta$ is compact, so we now show the result for $\Theta$ that are not.

Define $\mathcal{A}:=\{\nu\in\mathcal{Q}:T_n(\nu)\le C\}$, then we aim to show that $\forall\varepsilon>0$ $\exists K\in\mathcal{T}$ compact, depending only on $\varepsilon$, such that $\forall\nu\in\mathcal{A}$ $\nu K>1-\varepsilon$. 

By \cref{lem:tightnessJ} we have that the sublevel sets of the functional $J_{L_n}(\cdot)$ are tight which is sufficient to show that $\mathcal{A}$ is tight. 

As the divergence is non--negative, and there is some element $\hat{\nu}\in\mathcal{Q}$ for which $T_n(\hat{\nu})<\infty$ so $D(\hat{\nu}:\Pi)<\infty$, and for sufficiently large $C$, $\mathcal{A}$ is non--empty. Hence $\forall\nu\in\mathcal{A}$ we have that $C\ge T_n(\nu)\ge J_{L_n}(\nu)$. By \cref{lem:tightnessJ} the set ${\mathcal{A}}':=\{\nu\in\mathcal{Q}:J_{L_n}(\nu)\le C\}$ is tight, and by the previous $\mathcal{A}\subset\mathcal{A}'$ so $\mathcal{A}$ is tight. By Prokhorov's theorem, and the lower semi--continuity of the functional $T_n(\cdot)$, $\mathcal{A}$ is compact with respect to the weak$^\star$ topology.

Having shown that the sublevel sets of the GVI functional are weak$^\star$ compact, and as the functional is lower semi--continuous, we may now by the definition of the infimum, construct a sequence $(\ell_k)_{k\in\mathbb{N}}\subset\mathbb{R}$ converging downwards to $\ell^\star:=\inf_{\nu\in\mathcal{Q}}T_n(\nu)$ defined via the corresponding elements $(\nu_k)_k\subset\mathcal{A}$, i.e. $\ell_k:=T_n(\nu_k)$. Since both the divergence and the loss are lower bounded, so is the GVI functional, so $\ell^\star>-\infty$. By the claim we find that $(\nu_k)_k$ is a tight sequence of probability measures, so after passing to a subsequence (without relabelling) we must have that $\nu_k\overset{w^\star}{\rightarrow}\nu_\star$ for some $\nu_\star\in\mathcal{Q}$, and for which it holds that 
$$
\liminf_kT_n(\nu_k)\ge T_n(\nu_\star)$$
by the lower semi--continuity. As $T_n(\nu_k)\rightarrow\ell^\star$, $\liminf_kT_n(\nu_k)=\ell^\star$, and as $T_n(\nu_\star)\ge \ell^\star$, we conclude that $T_n(\nu_\star)=\ell^\star$ which is a global minimum of the GVI functional, so such a minimiser $\nu_\star\in\mathcal{Q}$ of \cref{eqn:gvi_min} exists.

For the uniqueness part, suppose that there exist two minimisers, $\nu_1$ and $\nu_2$. Then, as $\mathcal{Q}$ is convex we may pick $a\in(0,1)$ and let $\nu:=a\nu_1+(1-a)\nu_2$ such that
\begin{align*}
T_n(\nu)&=J_{L_n}\bigl(a\nu_1+(1-a)\nu_2\bigr)+D\bigl(a\nu_1+(1-a)\nu_2:\Pi\bigr)\\
&< aJ_{L_n}\bigl(\nu_1\bigr) + (1-a)J_{L_n}\bigl(\nu_2\bigr) + aD(\nu_1:\Pi)+(1-a)D(\nu_2:\Pi)\\
&= a T_n(\nu_1)+ (1-a)T_n(\nu_2)
\end{align*}
where the inequality follows by the convexity of the divergence.
But as $\nu_1$ and $\nu_2$ are both minimisers $T_n(\nu_1)=T_n(\nu_2)$ so $\nu$ would further lower the GVI objective, so these cannot be minimisers. Hence, we conclude that the only way $\nu=a\nu_1+(1-a)\nu_2$ is if $\nu_1=\nu_2$ (in the sense of measures), and the minimiser may not be written as a convex combination of elements of $\mathcal{Q}$ so this minimiser is an extreme point and unique.
\end{proof}

\subsection{Characterising the GVI Posterior through the Empirical Loss Minimiser}\label{sec:proofset}

\begin{lemma}\label{lem:pref}
Let \cref{asp:loss,asp:finite,asp:bdd} hold. Suppose $\exists C\in\mathbb{R}$ and that $\exists P,Q\in\mathcal{Q}$ such that 
\begin{align*}
	\mathbb{E}_{P}[L(X_1^n,\,\theta)]\le C,\; \mathrm{and}\; \mathbb{E}_{Q}[L(X_1^n,\,\theta)]> C+\frac{M}{n\beta},
\end{align*}%
then $\forall\Pi\in\mathcal{G}$ it holds that $T_n(P)<T_n(Q)$.
\end{lemma}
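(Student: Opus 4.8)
The plan is to bound $T_n(P)$ from above and $T_n(Q)$ from below, using the two hypotheses on the loss functionals together with \cref{asp:bdd}, and to check along the way that the quantities involved are finite so that the comparison is meaningful. Recall $T_n(\nu)=n\,J_{L_n}(\nu)+\frac1\beta D(\nu:\Pi)$ with $J_{L_n}(\nu)=\mathbb{E}_\nu[L(X_1^n,\theta)]$.

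First I would note that by \cref{asp:loss} there is a constant $b\in\mathbb{R}$ with $L(X_1^n,\theta)\ge b$ for all $\theta$, hence $J_{L_n}(P)\ge b>-\infty$; combined with the hypothesis $J_{L_n}(P)\le C$ this makes $J_{L_n}(P)$ finite, and since $0\le D(P:\Pi)\le M$ by \cref{asp:bdd}, the value $T_n(P)$ is finite. For the upper bound, $T_n(P)=n\,J_{L_n}(P)+\frac1\beta D(P:\Pi)\le nC+\frac M\beta$, using $J_{L_n}(P)\le C$ and $D(P:\Pi)\le M$. For the lower bound, the strict inequality $J_{L_n}(Q)>C+\frac{M}{n\beta}$ together with non-negativity of the divergence gives $T_n(Q)=n\,J_{L_n}(Q)+\frac1\beta D(Q:\Pi)>nC+\frac M\beta+\frac1\beta D(Q:\Pi)\ge nC+\frac M\beta$. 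Chaining the two estimates, $T_n(P)\le nC+\frac M\beta<T_n(Q)$. Since the bound $D(\cdot:\Pi)\le M$ in \cref{asp:bdd} is uniform over all $\Pi\in\mathcal{G}$, the same conclusion holds for every prior in $\mathcal{G}$.

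The one point to be careful about is the source of the strict inequality in the conclusion: it comes entirely from the strict inequality assumed on $J_{L_n}(Q)$, and the gap of exactly $\frac{M}{n\beta}$ between the two hypotheses is precisely what a (uniformly) bounded divergence can absorb, while the divergence attached to $P$ contributes at most $\frac M\beta$. Consequently no continuity, convexity, or topological properties of $\mathcal{Q}$ are needed for this lemma; the only ``obstacle'' is the routine bookkeeping of which inequalities are strict and which are weak, together with the verification that $T_n(P)\neq-\infty$ so that the final chaining is legitimate.
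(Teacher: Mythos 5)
Your proof is correct and follows essentially the same route as the paper: bound $T_n(P)\le nC+\frac M\beta$ via \cref{asp:bdd} and bound $T_n(Q)>nC+\frac M\beta$ via the strict hypothesis and non-negativity of the divergence, with both bounds uniform over $\Pi\in\mathcal{G}$. The added finiteness check for $T_n(P)$ is a harmless extra precaution not present in the paper's argument.
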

\begin{proof}%
For any $P,Q\in\mathcal{Q}$ satisfying the assumption above, we have that:
\begin{gather*}
T_n(P)=n\mathbb{E}_{P}[L(X_1^n,\theta)]+\frac1\beta D(P:\cdot)\le nC+ \sup_{\Pi\in\mathcal{G}}\frac1\beta D(P:\Pi)\le nC + \frac M{\beta} \\
T_n(Q)=n\mathbb{E}_{Q}[L(X_1^n,\theta)]+\frac1\beta D(Q:\cdot)>n(C+\frac M{n\beta})+ \inf_{\Pi\in\mathcal{G}}\frac1\beta D(Q:\Pi)\ge nC+\frac M{\beta}
\end{gather*}
hence 
$T_n(P) < T_n(Q)$ for all priors $\Pi\in\mathcal{G}$.
\end{proof}

\begin{proof}[Proof of \cref{thm:set}]
By \cref{asp:mle}, $\exists P_n^\star\in\mathcal{Q}$ such that
$$
P_n^\star\in\argmin_{Q\in\mathcal{Q}}\mathbb{E}_Q[L(X_1^n,\,\theta)].
$$
Now let $C= \mathbb{E}_{P_n^\star}[L(X_1^n,\,\theta)]$. 
Then for all $Q\in\mathcal{Q}$ that satisfy 
$\mathbb{E}_{Q}[L(X_1^n,\,\theta)]>C+ M(n\beta)^{-1},$ 
by \cref{lem:pref} it must hold that $T_n(P_n^\star)<T_n(Q)$, and hence $Q(d\theta)$ is not a minimiser of the GVI objective. 
Therefore, the only measures $Q'\in\mathcal{Q}$ that are potential minimisers of the GVI objective for any prior $\Pi\in\mathcal{G}$ are those that satisfy: $T_n(Q')\le T_n(P_n^\star)$.

And since $T_n(P_n^\star)\le nC+\beta^{-1}M$, we have
\begin{gather*}
    T_n(Q')=n\mathbb{E}_{Q'}[L(X_1^n,\,\theta)]+\beta^{-1}D(Q':\Pi)\le nC+\beta^{-1}M\\
    \implies n(\mathbb{E}_{Q'}[L(X_1^n,\,\theta)]- C)\le \beta^{-1}(M - D(Q':\Pi))\\
    \implies \mathbb{E}_{Q'}[L(X_1^n,\,\theta)]- \mathbb{E}_{P_n^\star}[L(X_1^n,\,\theta)]\le M(n\beta)^{-1}
\end{gather*}
Therefore, all probability measures $Q'\in\mathcal{Q}$ that satisfy the above bound are potential minimisers of $T_n(\cdot)$. This is precisely the set $\mathcal{R}^\star_n$. Furthermore, by \cref{asp:mle} we know that $\forall Q'\in\mathcal{Q}$ we have $\mathbb{E}_{Q'}[L(X_1^n,\,\theta)]\ge C$, so the LHS is non--negative, and since $P_n^\star$ is in this set, it is non--empty.

For the moreover part, we additionally suppose that $\mathcal{Q}$ is weak$^\star$ closed and that the loss is lower semi--continuous and coercive. It immediately follows that the functional $\nu\mapsto J_{L_n}(\nu)$
is lower semi--continuous, note that this follows by the functional being over probability measures and the loss being lower bounded and lower semi--continuous \cite{ambrosio2008}. 
We then know  by \cref{lem:tightnessJ} that the sublevel sets of the functional $J_{L_n}(\cdot)$ are tight. Hence, by the lower semi--continuity of the functional, $\mathcal{Q}$ being weak$^\star$ closed, and Prokhorov's theorem \cite{billingsley1999}, the set $\mathcal{A}:=\{\nu\in\mathcal{Q}:J_{L_n}(\nu)\le k\}$ is weak$^\star$ compact.

Therefore, we may apply this to $\mathcal{R}_n^\star$ by picking $k=M(n\beta)^{-1}+\inf_{\nu\in\mathcal{Q}}J_{L_n}(\nu)$, which exists and is finite by assumption, establishing that $\mathcal{R}_n^\star$ is weak$^\star$ compact as desired.
\end{proof}

\subsection{Asymptotics and Frequentist Consistency}\label{sec:asymptotics}

\begin{lemma}\label{lem:minF}
Suppose that $f:\Theta\rightarrow\mathbb{R}$ is a $\mathcal{T}$--measurable, lower--bounded, lower semi--continuous, and coercive function.
Any minimiser of the functional $J_f(\cdot)$ within $\PTheta$ must put full measure on any measurable set containing the minimisers of the function $f(\theta)$. 
\end{lemma}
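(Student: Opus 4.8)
The plan is to reduce the statement to the elementary measure-theoretic fact that the integral of a non-negative function against a probability measure vanishes precisely when the function vanishes almost surely. First I would record that the hypotheses on $f$ guarantee that $f$ attains its infimum. Fix any $\theta_0\in\Theta$; the sublevel set $\{\theta:f(\theta)\le f(\theta_0)\}$ is closed by lower semi-continuity and precompact by coercivity, hence compact and non-empty, and a lower semi-continuous function attains its minimum on a compact set. Thus $m:=\inf_{\theta\in\Theta}f(\theta)>-\infty$ and the minimiser set $M^\star:=\{\theta\in\Theta:f(\theta)=m\}$ is non-empty; since $f$ is $\mathcal{T}$-measurable, $M^\star\in\mathcal{T}$.

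Next I would identify the minimum value of $J_f(\cdot)$ over $\PTheta$. For every $\nu\in\PTheta$ we have $J_f(\nu)=\int_\Theta f\,d\nu\ge\int_\Theta m\,d\nu=m$, while taking $\nu=\delta_{\theta^\star}$ for any $\theta^\star\in M^\star$ gives $J_f(\delta_{\theta^\star})=m$. Hence $\inf_{\nu\in\PTheta}J_f(\nu)=m$ and it is attained. Consequently, if $\nu^\star$ is any minimiser of $J_f(\cdot)$, then $J_f(\nu^\star)=m$, i.e. $\int_\Theta(f-m)\,d\nu^\star=0$. Since the integrand $f-m$ is non-negative, this forces $f=m$ $\nu^\star$-almost surely, that is $\nu^\star(\{\theta:f(\theta)>m\})=0$, equivalently $\nu^\star(M^\star)=1$.

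Finally, for an arbitrary measurable set $A$ with $M^\star\subseteq A$, monotonicity of $\nu^\star$ gives $\nu^\star(A)\ge\nu^\star(M^\star)=1$, hence $\nu^\star(A)=1$, which is the claim. The only points requiring any care are the attainment of $\inf_\theta f(\theta)$ (handled by the compactness of a single sublevel set) and the equality case $\int(f-m)\,d\nu^\star=0\Rightarrow f=m$ $\nu^\star$-a.e.; everything else is bookkeeping. I do not expect a substantive obstacle here — the main subtlety is merely ensuring that $M^\star$ is non-empty and measurable, which is exactly what the stated regularity assumptions on $f$ provide, so that the lemma can later be invoked with $f=L$ (or $f=L_n$) and $A$ a neighbourhood $N_\varepsilon$ of the loss minimisers.
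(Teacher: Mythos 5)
Your proposal is correct and follows essentially the same route as the paper: both identify $\inf_{\nu\in\PTheta}J_f(\nu)=\inf_{\theta\in\Theta}f(\theta)$, use lower-boundedness, lower semi-continuity and coercivity to guarantee the minimiser set $N_0=\{\theta:f(\theta)=\inf_\Theta f\}$ is non-empty and measurable with Dirac (or $N_0$-supported) measures attaining the bound, and then conclude that any minimiser of $J_f$ concentrates there. The only cosmetic difference is the final step: you invoke the equality case $\int(f-m)\,d\nu^\star=0$ with $f-m\ge0$ to get $\nu^\star N_0=1$ directly, whereas the paper splits the integral over the $\varepsilon$-sublevel sets $N_\varepsilon$ and shows any measure with $\mu N_\varepsilon<1$ is strictly suboptimal — both yield the stated conclusion.
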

\begin{proof}
Let $\varepsilon>0$ and denote $N_\varepsilon:=\{\theta\in\Theta:f(\theta)\le\inf_{\vartheta\in\Theta}f(\vartheta)+\varepsilon\}$. For ease of notation, we denote $\inf_{\theta\in A} f(\theta)\equiv f(A)$.
We then note that for $\nu\in\PTheta$
$$
J_f(\nu):=\int_\Theta fd\nu=\int_{N_\varepsilon}fd\nu+\int_{N_\varepsilon^c}fd\nu\ge f(\Theta)\,.
$$
As the function is lower bounded, lower semi--continuous and coercive, there exists some minimiser away from the boundary of $\Theta$, specifically it achieves its minimum on some compact subset of $\Theta$ away from the boundary, so $N_0:=\{\theta\in\Theta:f(\theta)=f(\Theta)\}$ is well defined and non--empty. Note that this is measurable as we may define it as a countable intersection of measurable sets.

Then for any measure $\nu\in\PTheta$ for which $\nu N_0=1$ the lower bound above is achieved. Hence, $\forall\varepsilon>0$ and $\forall\mu\in\PTheta$ for which $\mu N_\varepsilon<1$, it follows that $J_f(\nu)<J_f(\mu)$. We may see this as 
\begin{align*}
J_f(\mu)=\int_{N_\varepsilon}fd\mu+\int_{N_\varepsilon^c}fd\mu    
&\ge\mu N_\varepsilon\cdot f(\Theta) + \mu N_\varepsilon^c \cdot (f(\Theta)+\varepsilon)
\\&
=f(\Theta)+\varepsilon(1-\mu N_\varepsilon)>f(\Theta)
\end{align*}
establishing the claim.
\end{proof}

\begin{lemma}\label{lem:LnTheta}
Make \cref{asp:loss,asp:finite,asp:lscL}, and let $\mathcal{Q}=\PTheta$. Then $(P^\star_n)_n$ will be a sequence of Dirac measures placing full measure on the set of points minimising the corresponding loss. As a consequence we have that  $J_{L_n}(P_n^\star)=L_n(\Theta)$.
\end{lemma}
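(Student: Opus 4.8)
The plan is to apply \cref{lem:minF} directly to the loss functions $L_n = L(X_1^n,\,\cdot)$, one value of $n$ at a time. First I would verify the hypotheses of that lemma: by \cref{asp:loss} each $L_n$ is $\mathcal{T}$--measurable and lower bounded, and by \cref{asp:lscL} each $L_n$ is lower semi--continuous and coercive; since here $\mathcal{Q}=\PTheta$, the lemma applies verbatim with $f = L_n$. Coerciveness together with lower semi--continuity and lower boundedness yields, via the standard Weierstrass argument (sublevel sets are compact, $L_n$ attains its infimum on each), that $N_0 := \{\theta\in\Theta : L_n(\theta)=L_n(\Theta)\}$ is non--empty; it is measurable because $N_0 = \bigcap_{k\ge 1}\{\theta : L_n(\theta)\le L_n(\Theta)+1/k\}$, a countable intersection of closed sublevel sets.

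By \cref{lem:minF}, every minimiser of the linear functional $J_{L_n}(\cdot)=\mathbb{E}_{(\cdot)}[L_n]$ over $\PTheta$ must assign full mass to $N_0$, and indeed to any measurable set containing $N_0$. It remains to record that such minimisers exist and may be taken to be Dirac: for any $\theta_n^\star\in N_0$ the measure $\delta_{\theta_n^\star}\in\PTheta=\mathcal{Q}$ satisfies $J_{L_n}(\delta_{\theta_n^\star}) = L_n(\theta_n^\star) = L_n(\Theta)$, whereas $J_{L_n}(\nu)=\int_\Theta L_n\,d\nu\ge L_n(\Theta)$ for every $\nu\in\PTheta$; hence $\delta_{\theta_n^\star}$ attains the infimum and $\inf_{\nu\in\PTheta}J_{L_n}(\nu)=L_n(\Theta)$. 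So $(P_n^\star)_n$ is a well--defined sequence (consistent with \cref{asp:mle}) that may be chosen as Dirac measures on minimisers of $L_n$; and for any admissible choice of $P_n^\star$, since it puts full mass on $N_0$ where $L_n\equiv L_n(\Theta)$, we obtain $J_{L_n}(P_n^\star)=\int_{N_0}L_n\,dP_n^\star = L_n(\Theta)$.

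I do not expect a substantive obstacle: the statement is essentially a corollary of \cref{lem:minF}. The only points needing a little care are the measurability and non--emptiness of $N_0$, which I handle through the coercivity/lower semi--continuity argument above, and the mild abuse in the phrase ``$P_n^\star$ is a Dirac measure'', which should be read as ``may be taken to be Dirac'', since uniqueness fails whenever $L_n$ has several minimisers. If one wants to be fully careful about the dependence on $\omega$, all of the above is carried out for a fixed realisation of $X_1^n$, and the conclusion holds for every $n$ (or $P_0$--a.s., as phrased elsewhere), which is all that later results require.
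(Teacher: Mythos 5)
Your argument is correct and mirrors the paper's own proof: both invoke \cref{lem:minF} for each fixed $n$, take any $\theta_n^\star$ in the (non--empty, measurable) set of loss minimisers $N_0^n$, and set $P_n^\star=\delta_{\theta_n^\star}$, which attains the lower bound $L_n(\Theta)$ of $J_{L_n}(\cdot)$. The extra care you take over measurability, non--emptiness, and the ``may be taken to be Dirac'' reading is consistent with, and slightly more explicit than, the paper's proof.
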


\begin{proof}
By \cref{lem:minF} we have that the measure minimising $J_{L_n}(\cdot)$ will have full measure on the set $N^n_0:=\{\theta\in\Theta:L(X_1^n,\,\theta)=L_n(\Theta)\}$. Since any such measure achieves the lower bound on the functional $J_{L_n}(\cdot)$ we may pick any $\theta_n^\star\in N^n_0$ and let $P^\star_n:=\delta_{\theta^\star_n}$. Then $\delta_{\theta^\star_n}N_0^n=1$ as required.
\end{proof}

\begin{lemma}\label{lem:concentrationMLE}
Make \cref{asp:loss,asp:finite,asp:lscL,asp:L-ae} and suppose $\mathcal{Q}=\PTheta$. Let $A\in\mathcal{T}$ be any set for which $L(A)>L(\Theta)$. Suppose further that for any such $A$ it almost surely holds that
$$
\liminf_{n}\;L_n(A)>L(\Theta)\,,
$$
then $P_0$ almost surely $P_n^\star A\rightarrow 0$ as $n\rightarrow\infty$.
\end{lemma}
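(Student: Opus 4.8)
The plan is to reduce the statement to a property of the set of empirical loss minimisers $N_0^n := \{\theta\in\Theta : L(X_1^n,\theta) = L_n(\Theta)\}$. By \cref{lem:LnTheta} (equivalently, \cref{lem:minF} applied to $f = L_n$, which is $\mathcal{T}$-measurable, lower bounded, coercive and lower semi-continuous under \cref{asp:loss,asp:finite,asp:lscL}), every minimiser $P_n^\star$ of $J_{L_n}(\cdot)$ over $\PTheta$ satisfies $P_n^\star N_0^n = 1$, and hence $P_n^\star A = P_n^\star(A\cap N_0^n)$. So it is enough to show that, $P_0$-almost surely, $A\cap N_0^n = \emptyset$ for all $n$ large enough; note this is a statement about the loss alone, so it is automatically uniform over the (possibly non-unique) choice of $P_n^\star$.

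To establish that, I would argue by contradiction. Suppose that along a subsequence $(n_k)$ there exists $\theta_{n_k}\in A\cap N_0^{n_k}$. Since $A\subset\Theta$ we always have $L_n(A)\ge L_n(\Theta)$, while $\theta_{n_k}\in A$ together with $L(X_1^{n_k},\theta_{n_k}) = L_{n_k}(\Theta)$ forces $L_{n_k}(A)\le L_{n_k}(\Theta)$; hence $L_{n_k}(A) = L_{n_k}(\Theta)$ along the subsequence. Now invoke \cref{asp:L-ae}, which gives $P_0$-a.s. that $\limsup_n L_n(\Theta)\le L(\Theta)$. Passing to the subsequence, $\liminf_k L_{n_k}(A) = \liminf_k L_{n_k}(\Theta)\le \limsup_n L_n(\Theta)\le L(\Theta)$, so in particular $\liminf_n L_n(A)\le L(\Theta)$, contradicting the hypothesis $\liminf_n L_n(A) > L(\Theta)$. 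Therefore $A\cap N_0^n = \emptyset$ eventually, so $P_n^\star A = 0$ eventually and a fortiori $P_n^\star A\to 0$.

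The only delicate point — the closest thing to an obstacle here — is the measure-theoretic bookkeeping: the bound $\limsup_n L_n(\Theta)\le L(\Theta)$ from \cref{asp:L-ae} and the hypothesised inequality $\liminf_n L_n(A) > L(\Theta)$ each hold off a $P_0$-null set, and one must carry out the contradiction on the intersection of these two full-measure events (for the fixed $A$ under consideration). Everything else in the argument is elementary, relying only on $L_n(A)\ge L_n(\Theta)$ and monotonicity of $\liminf$/$\limsup$ along subsequences.
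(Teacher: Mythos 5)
Your proof is correct and follows essentially the same route as the paper: both reduce the claim via \cref{lem:minF}/\cref{lem:LnTheta} to the fact that $P_n^\star$ puts full mass on (near-)minimisers of $L_n$, and then use $\liminf_n L_n(A) > L(\Theta) \ge \limsup_n L_n(\Theta)$ from \cref{asp:L-ae} to conclude that $A$ eventually contains no such points. The only cosmetic difference is that you argue by contradiction directly with the exact minimiser set $N_0^n$, whereas the paper fixes the margin $\varepsilon=\tfrac12(\liminf_n L_n(A)-L(\Theta))$ and shows $A$ is eventually disjoint from $N_\varepsilon^n$.
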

\begin{proof}
For any such set $A$, there exists an $\varepsilon>0$ such that the sets $N_\varepsilon$ and $A$ are disjoint.
In particular, by \cref{lem:minF} $\forall n\in\mathbb{N}$ $P_n^\star N_\varepsilon^n=1$ $\forall\varepsilon>0$. Therefore, it remains to be shown that eventually almost surely $\exists\varepsilon>0$ such that $A\cap N_\varepsilon^n=\emptyset$ for all $n$ sufficiently large which will imply the claim. 
To see this, by the assumption it is almost sure that $
\liminf_n L_n(A)>L(\Theta)$
which implies we may pick $\varepsilon=1/2(\liminf_nL_n(A)-L(\Theta))>0$. Further, $L_n\rightarrow L$ as $n\rightarrow\infty$ pointwise, and %
$\limsup_nL_n(\Theta)\le L(\Theta)$ ($P_0$--a.s.) by \cref{asp:L-ae}. Hence for all $n$ sufficiently large it holds almost surely that $L_n(A)-L_n(\Theta)>0$. And hence, eventually $P_0$--a.s. $A$ and $N_\varepsilon^n$, with $\varepsilon$ as above, will be disjoint.
\end{proof}

\begin{lemma}\label{lem:concentration}
Make \cref{asp:loss,asp:lscL,asp:bdd,asp:finite,asp:lscD,asp:L-ae}, and suppose that $\mathcal{Q}=\PTheta$. Recall the definition of the set $N_\varepsilon:=\{\theta\in\Theta:L(\theta)\le L(\Theta)+\varepsilon\}$. Suppose that $\forall\varepsilon>0$ the losses $P_0$--a.s. satisfy
$$
\liminf_{n}\,L_n(N_\varepsilon^c)>L(\Theta)\,.
$$
Then $Q_nN_\varepsilon\rightarrow1$ as $n\rightarrow\infty$ $P_0$--a.s. for any $\varepsilon>0$.
\end{lemma}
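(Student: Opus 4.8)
The plan is to combine \cref{thm:set} with \cref{lem:LnTheta}. Since here $\mathcal{Q}=\PTheta$, the latter identifies $P_n^\star$ with a Dirac measure at a minimiser of $L_n$, so that $J_{L_n}(P_n^\star)=L_n(\Theta)$, while the former places $Q_n$ (which exists by \cref{col:existence}, all of whose hypotheses are among those assumed here and with $\PTheta$ trivially weak$^\star$ closed) in $\mathcal{R}_n^\star$, i.e. $J_{L_n}(Q_n)-J_{L_n}(P_n^\star)\le M(n\beta)^{-1}$. Together these give the single inequality $\int_\Theta L(X_1^n,\theta)\,Q_n(d\theta)\le L_n(\Theta)+M(n\beta)^{-1}$, valid for every prior $\Pi\in\mathcal{G}$, which pins the $Q_n$-expected loss to the infimal loss up to an $O(n^{-1})$ slack.

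Fix $\varepsilon>0$; if $N_\varepsilon^c=\emptyset$ the conclusion is immediate, so suppose otherwise. The next step is to extract a gap that does not shrink with $n$. Working on the intersection of the two full-measure events on which the hypothesis $\liminf_nL_n(N_\varepsilon^c)>L(\Theta)$ and the bound $\limsup_nL_n(\Theta)\le L(\Theta)$ of \cref{asp:L-ae} both hold, I set $\delta:=\tfrac12\bigl(\liminf_nL_n(N_\varepsilon^c)-\limsup_nL_n(\Theta)\bigr)>0$ and obtain an (a.s.\ finite, random) index $n_0$ with $L_n(N_\varepsilon^c)-L_n(\Theta)\ge\delta$ for all $n\ge n_0$.

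With the gap in hand I split the integral over $N_\varepsilon$ and $N_\varepsilon^c$, legitimate since $L_n$ is bounded below, and bound each summand below by the infimum of $L_n$ over the corresponding set: on $N_\varepsilon$, $L_n\ge L_n(N_\varepsilon)\ge L_n(\Theta)$ (an infimum over a subset); on $N_\varepsilon^c$, $L_n\ge L_n(N_\varepsilon^c)\ge L_n(\Theta)+\delta$ for $n\ge n_0$. Since $Q_nN_\varepsilon+Q_nN_\varepsilon^c=1$ this gives $\int_\Theta L(X_1^n,\theta)\,Q_n(d\theta)\ge L_n(\Theta)+\delta\,Q_nN_\varepsilon^c$, and chaining with the inequality from the first step yields $\delta\,Q_nN_\varepsilon^c\le M(n\beta)^{-1}$. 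Letting $n\to\infty$ gives $Q_nN_\varepsilon^c\to0$, hence $Q_nN_\varepsilon\to1$, $P_0$--a.s.

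The computation is short, so I expect the only delicate points to be bookkeeping rather than substantive. First, every lower bound enters in the form ``$L_n\ge$ an infimum over the relevant set'', so it survives the fact that \cref{asp:loss} grants only lower boundedness and not non-negativity of the loss. Second, and this is the real crux, one must be able to take $\delta$ uniform in $n$: this is exactly why the hypothesis is posed with a $\liminf$ strictly exceeding $L(\Theta)$ rather than the weaker pointwise $L_n(N_\varepsilon^c)>L(\Theta)$, and it is why \cref{asp:L-ae}'s $\limsup$ bound is needed to control $L_n(\Theta)$ from above so the gap between the two does not collapse. A final routine remark is that $N_\varepsilon$ is non-empty and $\mathcal{T}$-measurable, since $L$ is real-valued (hence $L(\Theta)$ is finite) and $\mathcal{T}$-measurable by \cref{asp:L-ae}, so $N_\varepsilon=L^{-1}((-\infty,L(\Theta)+\varepsilon])$.
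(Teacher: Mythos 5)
Your proof is correct and follows essentially the same route as the paper: identify $J_{L_n}(P_n^\star)=L_n(\Theta)$ via \cref{lem:LnTheta}, use \cref{thm:set} to bound $J_{L_n}(Q_n)-L_n(\Theta)\le M(n\beta)^{-1}$, split the integral over $N_\varepsilon$ and $N_\varepsilon^c$, and exploit the a.s.\ gap $\liminf_n L_n(N_\varepsilon^c)>L(\Theta)\ge\limsup_nL_n(\Theta)$ together with $M(n\beta)^{-1}\to0$. Your explicit extraction of a uniform $\delta>0$ is just a slightly more quantitative rendering of the paper's concluding step and introduces no substantive difference.
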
 
\begin{proof}
We know that $P_n^\star N_\varepsilon\rightarrow1$ as $n\rightarrow\infty$ $P_0$--a.s. $\forall\varepsilon>0$ by \cref{lem:concentrationMLE}. By \cref{thm:set} we have that $\forall n$ the GVI posteriors $Q_n\in\mathcal{R}{_n^\star}:=\{\nu\in\mathcal{Q}:J_{L_n}(\nu)-J_{L_n}(P^\star_n)\le\frac M{\beta n}\}$.

Fix $\varepsilon>0$. Recall that by \cref{lem:LnTheta} we have $J_{L_n}(P_n^\star)=L_n(\Theta)$, and by \cref{asp:L-ae} $\limsup_nL_n(\Theta)\le L(\Theta)$ (a.s.). By assumption we have that eventually $L_n(N_\varepsilon^c)>L(\Theta)$ a.s. for all $n$ sufficiently large and therefore $ \liminf_nL_n(N_\varepsilon)=\liminf_n L_n(\Theta)$ $P_0$--a.s. 

By \cref{thm:set} we may write for all $n\in\mathbb{N}$
\begin{align}
    \frac{M}{\beta n}&\ge J_{L_n}(Q_n)-J_{L_n}(P^\star_n)\nonumber\\
    &=\int_{N_\varepsilon}L_ndQ_n+\int_{N_\varepsilon^c}L_ndQ_n-L_n(\Theta)\nonumber\\
    &\ge Q_nN_\varepsilon\cdot L_n(N_\varepsilon)+Q_nN_\varepsilon^c\cdot L_n(N_\varepsilon^c)-L_n(\Theta)\nonumber\\
    &=Q_nN_\varepsilon\left( L_n(N_\varepsilon)-L_n(\Theta)\right)+Q_nN_\varepsilon^c\left(L_n(N_\varepsilon^c)-L_n(\Theta)\right)\ge0\,.\label{eqn:concentration}
\end{align}
This is strictly non--negative $\forall n$ as $L_n(A)\ge L_n(\Theta)$ for all sets $A\in\mathcal{T}$. Then eventually $P_0$--a.s., $L_n(N_\varepsilon^c)>L(\Theta)$, and it suffices to show the behaviour of ${Q_n}$ on $N_\varepsilon^c$. 
Therefore, $P_0$ almost surely
\begin{align*}
    L_n(N_\varepsilon)-L_n(\Theta)\ge L_n(\Theta)-L_n(\Theta)= 0\,, \qquad \liminf_nL_n(N_\varepsilon^c)-L_n(\Theta)>0\,.
\end{align*}

As $M(\beta n)^{-1}$ converges to 0 uniformly, and we have the non--negativity of \cref{eqn:concentration}, we conclude that $Q_nN_\varepsilon^c\rightarrow0$ as $n\rightarrow\infty$ (a.s.), establishing the claim.
\end{proof}

\begin{proof}[Proof of \cref{thm:asymptotics}]
Fix $A\in\mathcal{T}$ for which $L(A)>L(\Theta)$, and pick $\varepsilon=1/2(L(A)-L(\Theta))>0$ for which it holds that $A\subset N_\varepsilon^c$. 

Since $\mathcal{Q}\subset\PTheta$ allows for Dirac measures we have that for all $n$ sufficiently large $\inf_{\nu\in\mathcal{Q}}J_{L_n}(\nu)=L_n(\Theta)$. By \cref{lem:tightnessJ} and reasoning similar to the proof of \cref{thm:existenceGVI} for each $n$ $\exists(\nu^n_k)_k\subset\mathcal{Q}$ s.t. after passing to a subsequence in $k$ (without relabelling), %
$\nu_k^n\overset{w^\star}{\rightarrow}\nu_\star^n\in\mathcal{Q}$ as $k\rightarrow\infty$ for which $J_{L_n}(\nu_\star^n)=L_n(\Theta)$ $P_0$--a.s. 
It follows \cref{lem:concentration} applies and hence $Q_nN_\varepsilon\rightarrow1$ (a.s.) so $Q_nA\rightarrow0$ (a.s.).

For the moreover part, we assume that $\exists \theta^\star\in\Theta$ uniquely minimising $L:\Theta\rightarrow\mathbb{R}$. Therefore, the set $N_0:=\{\theta\in\Theta:L(\theta)=L(\Theta)\}$ is a singleton namely $N_0=\{\theta^\star\}$ and hence $\delta_{\theta^\star}$ minimises the functional $J_{L}(\cdot)$. As per the previous $Q_nN_\varepsilon\rightarrow1$ $\forall\varepsilon>0$ $P_0$--a.s. we may establish the following for sets $A\in\mathcal{T}$:
\begin{enumerate}
    \item If $\theta^\star\in A$ and $L(A)=L(\Theta)$, then $\delta_{\theta^\star}A=1$ so $Q_nA\rightarrow1$, and
    \item If $\theta^\star\notin A$ and $L(A)>L(\Theta)$, then $\delta_{\theta^\star}A=0$ so $Q_nA\rightarrow0$
\end{enumerate}
which hold by \cref{lem:concentration}. However, for sets like $A=\Theta\backslash\{\theta^\star\}\in\mathcal{T}$ we have $L(A)=L(\Theta)$. The weak$^\star$ convergence now follows by a typical $3\varepsilon$ proof. Recall that simple functions are dense in $C_b(\Theta)$ so for any $f\in C_b(\Theta)$ and any $\varepsilon>0$ there exists a simple function $g:\Theta\rightarrow\mathbb{R}$, where $A_j$ is a pairwise disjoint partition of $\Theta$ such that $g:=\sum_j\alpha_j\mathds{1}_{A_j}$ for some constants $\alpha_j\in\mathbb{R}$, such that we may approximate $f$ uniformly within $\varepsilon$ through $g$. It follows
$$
\left|\int_\Theta f \,dQ_n-\int_\Theta f\,d\delta_{\theta^\star}\right|\le 2\varepsilon + \sum_j|\alpha_j|\,|Q_n(A_j)-\delta_{\theta^\star}(A_j)|
$$
We may pick $(A_j)$ such that $\exists k$ for which $\delta_{\theta^\star}\in A_k$, and $\forall j\ne k$ $L(A_j)>L(\Theta)$. Hence, the above applies and for all $n$ sufficiently large we have that the entire expression is eventually $P_0$--a.s. less than $3\varepsilon$. As $\varepsilon$ was arbitrary the result follows. 
\end{proof}

\subsection{Consistency under Arbitrary Divergences}
\begin{proof}[Proof of \cref{thm:unboundedConcentration}]
By \cref{thm:existenceGVI} $Q_n$ exists $\forall n$. Then, for any such sequence $(\nu_n)\subset\mathcal{Q}$ as specified in the assumption it follows that $P_0$ almost surely
$$n^{-1}T_n(\nu_n)-L_n(\Theta)=J_{L_n}(\nu_n)-L_n(\Theta)+(n\beta)^{-1}D(\nu_n:\Pi)\rightarrow 0\,.$$
Pick any such set $A\in\mathcal{T}$ for which $L(A)>L(\Theta)$. It follows that $P_0$--a.s.
\begin{align*}
\frac1nT_n(\nu_n)\ge\frac1nT_n(Q_n)\ge J_{L_n}(Q_n) &=\int_AL_n(\theta)Q_n(d\theta)+\int_{A^c}L_n(\theta)Q_n(d\theta)\\
&\ge Q_nA\:L_n(A)+Q_nA^c\,L_n(A^c)\ge L_n(\Theta)\,.
\end{align*}
It follows that $n^{-1}T_n(Q_n)-L_n(\Theta)\rightarrow 0$. This implies
\begin{equation*}
    J_{L_n}(Q_n)-L_n(\Theta)\rightarrow0\,,\quad\mathrm{and}\quad (n\beta )^{-1}D(Q_n:\Pi)\rightarrow0
\end{equation*}
as both quantities are non--negative. Therefore, $J_{L_n}(Q_n)-L_n(\Theta)\ge Q_nA(L_n(A)-L_n(\Theta))\ge0$. But as 
\begin{gather*}
\liminf_{n\rightarrow\infty}L_n(A)>L(\Theta)\,,\quad \mathrm{and}\quad \limsup_{n\rightarrow\infty}L_n(\Theta)<L(\Theta)\\
\implies\liminf_{n\rightarrow\infty}(L_n(A)-L_n(\Theta))\ge \liminf_{n\rightarrow\infty}L_n(A)-\limsup_{n\rightarrow\infty}L_n(\Theta) >0\,,
\end{gather*}
and the result follows.
\end{proof}

\subsection{Generalisation Performance}
\begin{proof}[Proof of \cref{thm:gen}]
The proof follows exactly the same steps as Theorem 5 in \cite{shalizi2009} so we do not provide it here.
\end{proof}
\subsection{Rate of Convergence}\label{sec:rates}
\begin{proof}[Proof of \cref{thm:rates}] We may show this by demonstrating that $Q_nN_{\varepsilon_n}^c\rightarrow0$.
By the proof of \cref{thm:asymptotics} we may apply \cref{eqn:concentration} of \cref{lem:concentration}, such that $\forall n\in\mathbb{N}$ sufficiently large
\begin{align*}
    \frac{M}{\beta}n^{-1} &\ge J_{L_n}(Q_n)-J_{L_n}(P_n^\star)\\
    &\ge Q_n N_{\varepsilon_n}(L_n(N_{\varepsilon_n})-L_n(\Theta))+ Q_n N_{\varepsilon_n}^c (L_n(N_{\varepsilon_n}^c)-L_n(\Theta))\ge0\,.
\end{align*}
Further we have that
$$
N_{\varepsilon_n}:=\{\theta\in\Theta:L(\theta)\le L(\Theta)+\varepsilon_n\}\rightarrow\{\theta\in\Theta:L(\theta)=L(\Theta)\}=:N_0
$$
which is well defined as the loss is lower semi--continuous.
Then, by assumption on the loss and as it suffices to show the behaviour of $Q_n$ on $N_{\varepsilon_n}^c$, it follows that $L_n(N_{\varepsilon_n})\ge L_n(\Theta)$ and 
$$L_n(N_{\varepsilon_n})-L_n(\Theta)\ge L_n(\Theta) -L_n(\Theta)=0\,.$$
So we may discard the term $Q_n N_{\varepsilon_n}(L_n(N_{\varepsilon_n})-L_n(\Theta))$ to get
$${M}{\beta}^{-1}n^{-1}  \ge Q_n N_{\varepsilon_n}^c (L_n(N_{\varepsilon_n}^c)-L_n(\Theta))\ge 0\,.
$$
We now need to show that $Q_nN_{\varepsilon_n}^c\rightarrow0$. 
Taking these together, eventually $P_0$--a.s. $n$ is large enough for $\varepsilon_n$ to be sufficiently small such that we have $L_n(N_{\varepsilon_n}^c)-L_n(\Theta)\gtrsim \varepsilon_n$. Therefore it follows that 
\begin{align*}
    \frac M\beta n^{-1} &\gtrsim \varepsilon_nQ_nN_{\varepsilon_n}^c\\
    \frac M\beta&\gtrsim n\varepsilon_n\cdot Q_nN_{\varepsilon_n}^c\,.
\end{align*}
Then, as $n\varepsilon_n\rightarrow\infty$, it must follow that $P_0$--a.s. $Q_nN_{\varepsilon_n}^c\rightarrow0$ as otherwise, eventually 
$$
n\varepsilon_n\gtrsim\frac M{\beta \cdot Q_nN_{\varepsilon_n}^c}
$$
$P_0$ almost surely, which would contradict $Q_n\in\mathcal{R}_n^\star$. 
\end{proof}

\subsection{Dependence on $n$}
\begin{proof}[Proof of \cref{thm:n}]
    This is immediate from the preceding proofs of \cref{thm:asymptotics,thm:rates}.
\end{proof}
\section{Concluding Remarks}\label{sec:discussion}
In this paper we have examined the asymptotic behaviour of Generalised Variational Inference measures on infinite dimensional hypothesis spaces with particular focus on prior misspecification. We have established frequentist consistency and rates of convergence to minimisers of $L(\theta)$ when using bounded divergences allowing for consistency regardless of the prior measure $\Pi\in\mathcal{G}$; see \cref{fig:misspec}. In fact, supposing that the divergence $D$ is everywhere bounded enables the choice $\mathcal{G}=\PTheta$ hence including priors that are severely misspecified and result in inconsistent Bayesian posteriors. 

It is worth noting that these results have further impact to Federated GVI \cite{mildner2025}, the study of GVI posteriors over a network of clients with private, local data sets. In particular such \FedGVI posteriors will exist under the assumptions presented here through the different choices of loss functions and priors. Moreover, a global prior of some uniformed server, given a bounded divergence, may not skew the global posterior after incorporating local data towards unfavourable hypotheses that do not agree.

Finally, while we were able to derive existence and uniqueness, as well as a consistency result for GVI posteriors under unbounded divergences, rates of convergence of GVI posterior measures under unbounded divergences remains an open challenge.

\section*{Acknowledgement}
TM and TD were supported by a UKRI Turing AI Acceleration Fellowship [EP/V02678X/1] and a Turing Impact Award from The Alan Turing Institute. The first author would like to direct correspondence to \href{mailto:Terje.Mildner@kellogg.ox.ac.uk}{Terje.Mildner@kellogg.ox.ac.uk} or \href{mailto:Terje.Mildner@gmail.com}{Terje.Mildner@gmail.com}.

\appendix

\section{General}
As we expect this paper to be of interest also for researchers who are not as familiar with the intricacies of measure theory we recall some basic facts and definitions used throughout the paper without explicit statements.

We have used several modes of convergence of measures throughout the paper, we summarise these below.
\begin{definition}
	Let $(\mu_n)_n$ and $\mu$ be Borel probability measures on the measurable space $(\Theta,\mathcal{T})$. We have the following notions of convergence.
	\begin{enumerate}
		\item If $\int_\Theta fd\mu_n\rightarrow\int_\Theta fd\mu$ $\forall f\in C_b(\Theta)$, then we say $\mu_n\overset{w^\star}{\rightarrow}\mu$, i.e. in the weak$^\star$ topology.
		\item If $\mu_nA-\mu A\rightarrow0$ $\forall A\in\mathcal{T}$, then $\mu_n\overset{sw}{\rightarrow}\mu$, i.e. setwise.
		\item If $||\mu_n-\mu||_{TV}\rightarrow0$, then $\mu_n\overset{TV}{\rightarrow}\mu$, i.e. in total variation.
	\end{enumerate}
	Here $C_b(\Theta)$ denotes the space of bounded, continuous functions mapping $\Theta$ to $\mathbb{R}$.
	Note, convergence in total variation implies setwise convergence, and setwise convergence in turn implies weak$^\star$ convergence, however the converse do not hold in general.
\end{definition}
\begin{definition}
	A function $f:X\rightarrow Y$ between two topological spaces $(X,\mathcal{B})$ and $(Y,\mathcal{C})$ is called coercive if for every compact $K_Y\subset Y$ there exists a compact set $K_X\subset X$ such that the elements of $K_X^c$ map exclusively outside $K_Y$, that is 
    $$f(X\backslash K_X)\subseteq Y\backslash K_Y.$$
\end{definition}
\begin{definition}
	We say that $\mathcal{Q}$ `allows for Dirac measures' if (i) $\mathcal{Q}$ is closed with respect to weak$^\star$ convergence of probability measures, and (ii) if for all coercive, lower bounded and lower semi--continuous functions $f:\Theta\rightarrow\mathbb{R}$ it holds that
	$$\inf_{\nu\in\mathcal{Q}}\;J_f(\nu)= \inf_{\theta\in\Theta}\;f(\theta)\,.
	$$
\end{definition} 
Note that this is trivially achieved if all Dirac delta measures are included within $\mathcal{Q}$, but may equivalently be achieved for instance through the set of Gaussian measures by allowing for measures with arbitrary mean and zero variance \cite{pinski2015}. 
The most straightforward definition of such Gaussian measures on infinite dimensional spaces is the one on separable Hilbert spaces $(\mathcal{H}, (\cdot,\cdot),||\cdot||)$. Following \cite{pinski2015}, these measures are uniquely characterised by their mean $m\in\mathcal{H}$ and covariance operator $C:\mathcal{H}\rightarrow\mathcal{H}$. So we say a measure $\nu\in\mathcal{P}(\mathcal{H})$ is Gaussian if it satisfies
\begin{equation*}
	m:=\int_{\mathcal{H}}\theta \nu(d\theta)\,,\quad \mathrm{and}\quad(\theta_1,C\theta_2)=\int_{\mathcal{H}}(\theta,\theta_1)(\theta,\theta_2)\nu(d\theta)\quad\forall\theta_1,\theta_2\in\mathcal{H}
\end{equation*}
where the first is a Bochner integral. 
\section{Classical Robustness to Prior Misspecification}\label{apx:classicPriorRobust}
We briefly discuss the relation of our work to robustness to prior misspecification in the sense of Bayesian sensitivity analysis \cite{berger1985,gustafson1995}. 
This line of work has examined the stability of Bayesian posteriors under small perturbations of the prior with respect to some other measure, for instance through additive Huber contamination as mentioned in \cref{sec:prior_mis}. 
In particular, this has been used to select a robust prior for Bayes which leads to optimal decision making under potential perturbations of the prior measure. In this paper however, we aim to answer the reverse: under what conditions can GVI lead to optimal decision making under ill chosen priors.

Local sensitivity of posteriors generated with $\varepsilon$--contaminated priors, for instance by the additive Huber contamination model $\mathcal{G}=\{\Pi_\varepsilon:=(1-\varepsilon)\Pi_0+\varepsilon G: \varepsilon\in(0,1)\}$, is often studied \cite{gustafson1995} through the G\^{a}teux differential 
$$
s(\Pi_0, G;X_1^n)=\lim_{\varepsilon\downarrow0}\cfrac{d(Q^n_0:Q^n_\varepsilon)}{d(\Pi_0:\Pi_\varepsilon)}
$$
where $d(\cdot:\cdot)$ is some distance or divergence measure; Gustafson and Wasserman \cite{gustafson1995} for instance use the total variation distance.
Here $\Pi_0$ is the uncontaminated prior, $\Pi_\varepsilon$ the $\varepsilon$--contaminated prior in the direction of $G$, and $Q^n_0$ and $Q^n_\varepsilon$ the respective Bayes posteriors. 
The unrestricted choice of a divergence other than the Kullback--Leibler in GVI generally prohibits explicit expressions of $Q^n_0$ and $Q^n_\varepsilon$ that demonstrate the the exact dependence on the prior, which makes this approach impracticable for our purposes.

This presents an interesting theoretical question to the stability of Bayesian posteriors with respect to prior misspecification and suggests fascinating future directions for understanding uncertainty about the hypotheses in GVI given finite observations. 
The work in \cite{berger1985, gustafson1995} is concerned with small perturbations in the prior within a restricted class of measures $\mathcal{G}$. Often, restrictions are placed on this set in terms of how much the prior may vary; in \cite{dey1994} the priors considered are those close to the original prior $\Pi_0$ as $\varepsilon$ tends to zero. The present paper however is concerned with severe prior misspecification, even cases where we allow $\mathcal{G}=\PTheta$, as well as the asymptotic dynamics of GVI posterior measures under such misspecification.

\begin{bibdiv}
\begin{biblist}

\bib{algoet1988}{article}{
      author={Algoet, Paul~H.},
      author={Cover, Thomas~M.},
       title={{A Sandwich Proof of the Shannon-McMillan-Breiman Theorem}},
        date={1988},
     journal={Ann. Probab.},
      volume={16},
      number={2},
       pages={899 \ndash  909},
         url={https://doi.org/10.1214/aop/1176991794},
}

\bib{ali1966}{article}{
      author={Ali, S.~M.},
      author={Silvey, S.~D.},
       title={A general class of coefficients of divergence of one distribution
  from another},
        date={1966},
     journal={J. R. Statist. Soc. Ser. B},
      volume={28},
      number={1},
       pages={131\ndash 142},
}

\bib{alquier2016}{article}{
      author={Alquier, Pierre},
      author={Ridgway, James},
      author={Chopin, Nicolas},
       title={On the properties of variational approximations of gibbs
  posteriors},
        date={2016},
     journal={J. Mach. Learn. Res.},
      volume={17},
      number={236},
       pages={1\ndash 41},
}

\bib{amari2016}{book}{
      author={Amari, Shun-ichi},
       title={Information geometry and its applications},
   publisher={Springer},
     address={Tokyo, Japan},
        date={2016},
        ISBN={9784431559771},
}

\bib{ambrosio2008}{book}{
      author={Ambrosio, Luigi},
      author={Gigli, Nicola},
      author={Sava\'e, Giuseppe},
       title={Gradient flows: in metric spaces and in the space of probability
  measures},
     edition={2},
   publisher={Birkhäuser},
     address={Basel},
        date={2008},
}

\bib{barron1999}{article}{
      author={Barron, Andrew},
      author={Schervish, Mark~J.},
      author={Wasserman, Larry},
       title={{The consistency of posterior distributions in nonparametric
  problems}},
        date={1999},
     journal={Ann. Statist.},
      volume={27},
      number={2},
       pages={536 \ndash  561},
}

\bib{basu1998}{article}{
      author={Basu, Ayanendranath},
      author={Harris, Ian~R.},
      author={Hjort, Nils~L.},
      author={Jones, M.~C.},
       title={Robust and efficient estimation by minimising a density power
  divergence},
        date={199809},
        ISSN={0006-3444},
     journal={Biometrika},
      volume={85},
      number={3},
       pages={549\ndash 559},
}

\bib{berger1985}{book}{
      author={Berger, James~O.},
       title={Statistical decision theory and {B}ayesian analysis},
   publisher={Springer--Verlag},
     address={New York},
        date={1985},
        ISBN={9781475742862},
}

\bib{berk1966}{article}{
      author={Berk, Robert~H.},
       title={Limiting behavior of posterior distributions when the model is
  incorrect},
        date={1966},
     journal={Ann. Math. Statist.},
      volume={37},
      number={1},
       pages={51 \ndash  58},
}

\bib{bernardo2000}{book}{
      author={Bernardo, Jos\'e~M.},
      author={Smith, Adrian F.~M.},
       title={Bayesian theory},
   publisher={Wiley Series in Probability and Statistics},
     address={Chichester, England},
        date={2000},
        ISBN={9780470316870},
}

\bib{billingsley1999}{book}{
      author={Billingsley, Patrick},
       title={Convergence of probability measures},
     edition={2},
   publisher={Wiley},
     address={New York},
        date={1999},
        ISBN={0470317809;9780470317808;9780471197454;0471197459;},
}

\bib{bissiri2016}{article}{
      author={Bissiri, Pier~G.},
      author={Holmes, Chris},
      author={Walker, Stephen~G.},
       title={A general framework for updating belief distributions},
        date={2016},
     journal={J. R. Statist. Soc. Ser. B},
      volume={78},
      number={5},
       pages={1103\ndash 1130},
}

\bib{dembo2010}{book}{
      author={Dembo, Amir},
       title={Large deviations techniques and applications},
     edition={2},
   publisher={Springer},
     address={Berlin},
        date={2010},
      volume={38.},
        ISBN={9783642033117;3642033113;},
}

\bib{dey1994}{article}{
      author={Dey, Dipak~K.},
      author={Birmiwal, Lea~R.},
       title={Robust {B}ayesian analysis using divergence measures},
        date={1994},
        ISSN={0167-7152},
     journal={Statist. Probab. Lett.},
      volume={20},
      number={4},
       pages={287\ndash 294},
}

\bib{diaconis1986a}{article}{
      author={Diaconis, Persi},
      author={Freedman, David},
       title={On the consistency of {B}ayes estimates},
        date={\GG{11}1986},
     journal={Ann. Statist.},
      volume={14},
      number={1},
       pages={1 \ndash  26},
}

\bib{diaconis1986b}{article}{
      author={Diaconis, Persi},
      author={Freedman, David},
       title={On inconsistent {B}ayes estimates of location},
        date={\GG{22}1986},
     journal={Ann. Statist.},
      volume={14},
      number={1},
       pages={68 \ndash  87},
}

\bib{ghosal2000}{article}{
      author={Ghosal, Subhashis},
      author={Ghosh, Jayanta~K.},
      author={van~der Vaart, Aad~W.},
       title={{Convergence rates of posterior distributions}},
        date={2000},
     journal={Ann. Statist.},
      volume={28},
      number={2},
       pages={500 \ndash  531},
         url={https://doi.org/10.1214/aos/1016218228},
}

\bib{ghosal2007}{article}{
      author={Ghosal, Subhashis},
      author={van~der Vaart, Aad},
       title={{Convergence rates of posterior distributions for noniid
  observations}},
        date={2007},
     journal={Ann. Statist.},
      volume={35},
      number={1},
       pages={192 \ndash  223},
}

\bib{gustafson1995}{article}{
      author={Gustafson, Paul},
      author={Wasserman, Larry},
       title={{Local sensitivity diagnostics for Bayesian inference}},
        date={1995},
     journal={Ann. Statist.},
      volume={23},
      number={6},
       pages={2153 \ndash  2167},
}

\bib{jewson2018}{article}{
      author={Jewson, Jack},
      author={Smith, Jim~Q.},
      author={Holmes, Chris},
       title={Principles of {B}ayesian inference using general divergence
  criteria},
        date={2018},
     journal={Entropy},
      volume={20},
      number={6},
       pages={442},
}

\bib{katsevich2024}{article}{
      author={Katsevich, Anya},
      author={Rigollet, Philippe},
       title={{On the approximation accuracy of Gaussian variational
  inference}},
        date={2024},
     journal={Ann. Statist.},
      volume={52},
      number={4},
       pages={1384 \ndash  1409},
}

\bib{kechris1995}{book}{
      author={Kechris, Alexander~S.},
       title={Classical descriptive set theory},
   publisher={Springer New York},
     address={New York, NY},
        date={1995},
      volume={156.},
        ISBN={9781461241904},
}

\bib{kleijn2006}{article}{
      author={Kleijn, Bas J.~K.},
      author={van~der Vaart, Aad~W.},
       title={{Misspecification in infinite-dimensional Bayesian statistics}},
        date={2006},
     journal={Ann. Statist.},
      volume={34},
      number={2},
       pages={837 \ndash  877},
}

\bib{knoblauch2021}{thesis}{
      author={Knoblauch, Jeremias},
       title={Optimization-centric generalizations of {B}ayesian inference},
        type={PhD Thesis},
     address={Coventry, UK},
        date={2021},
}

\bib{jeremias2022}{article}{
      author={Knoblauch, Jeremias},
      author={Jewson, Jack},
      author={Damoulas, Theodoros},
       title={An optimization-centric view on {B}ayes' rule: Reviewing and
  generalizing variational inference},
        date={2022},
     journal={J. Mach. Learn. Res.},
      volume={23},
      number={132},
       pages={1\ndash 109},
}

\bib{mclatchie2025}{article}{
      author={McLatchie, Y},
      author={Fong, E},
      author={Frazier, D~T},
      author={Knoblauch, J},
       title={Predictive performance of power posteriors},
        date={202505},
        ISSN={1464-3510},
     journal={Biometrika},
      volume={112},
      number={3},
       pages={asaf034},
}

\bib{mildner2025}{inproceedings}{
      author={Mildner, Terje},
      author={Hamelijnck, Oliver},
      author={Giampouras, Paris},
      author={Damoulas, Theodoros},
       title={Federated generalised variational inference: A robust
  probabilistic federated learning framework},
        date={202513--19 Jul},
   booktitle={Proc. 42nd int. conf. mach. learn.},
      editor={Singh, Aarti},
      editor={Fazel, Maryam},
      editor={Hsu, Daniel},
      editor={Lacoste-Julien, Simon},
      editor={Berkenkamp, Felix},
      editor={Maharaj, Tegan},
      editor={Wagstaff, Kiri},
      editor={Zhu, Jerry},
      series={Proceedings of Machine Learning Research},
      volume={267},
   publisher={PMLR},
       pages={44134\ndash 44174},
}

\bib{miller2021}{article}{
      author={Miller, Jeffrey~W.},
       title={Asymptotic normality, concentration, and coverage of generalized
  posteriors},
        date={2021},
     journal={J. Mach. Learn. Res.},
      volume={22},
      number={168},
       pages={1\ndash 53},
}

\bib{pinski2015}{article}{
      author={Pinski, F.~J.},
      author={Simpson, G.},
      author={Stuart, A.~M.},
      author={Weber, H.},
       title={Kullback-leibler approximation for probability measures on
  infinite dimensional spaces},
        date={2015},
     journal={SIAM J. Math. Anal.},
      volume={47},
      number={6},
       pages={4091–4122},
}

\bib{schwartz1965}{article}{
      author={Schwartz, Lorraine},
       title={On {B}ayes procedures},
     journal={Z. Wahrscheinlichkeitstheor. Verw. Geb.},
      volume={4},
      number={1},
       pages={10\ndash 26},
}

\bib{shalizi2009}{article}{
      author={Shalizi, Cosma~Rohilla},
       title={{Dynamics of Bayesian updating with dependent data and
  misspecified models}},
        date={2009},
     journal={Electron. J. Statist.},
      volume={3},
       pages={1039\ndash 1074},
}

\bib{shen2001}{article}{
      author={Shen, Xiaotong},
      author={Wasserman, Larry},
       title={{Rates of convergence of posterior distributions}},
        date={2001},
     journal={Ann. Statist.},
      volume={29},
      number={3},
       pages={687 \ndash  714},
         url={https://doi.org/10.1214/aos/1009210686},
}

\bib{walker2004}{article}{
      author={Walker, Stephen},
       title={{New approaches to Bayesian consistency}},
        date={2004},
     journal={Ann. Statist.},
      volume={32},
      number={5},
       pages={2028 \ndash  2043},
}

\bib{walker2013}{article}{
      author={Walker, Stephen~G.},
       title={Bayesian inference with misspecified models},
        date={2013},
     journal={J. Statist. Plan. Inference},
      volume={143},
      number={10},
       pages={1621\ndash 1633},
}

\bib{wild2023}{inproceedings}{
      author={Wild, Veit~David},
      author={Ghalebikesabi, Sahra},
      author={Sejdinovic, Dino},
      author={Knoblauch, Jeremias},
       title={A rigorous link between deep ensembles and (variational) {B}ayesian
  methods},
        date={2023},
   booktitle={Proc. 36th int. conf. neural info. proces. syst.},
      editor={Oh, A.},
      editor={Naumann, T.},
      editor={Globerson, A.},
      editor={Saenko, K.},
      editor={Hardt, M.},
      editor={Levine, S.},
      volume={36},
   publisher={Curran Associates, Inc.},
       pages={39782\ndash 39811},
}

\bib{zellner1988}{article}{
      author={Zellner, Arnold},
       title={Optimal information processing and {B}ayes's theorem},
        date={1988},
     journal={Am. Statist.},
      volume={42},
      number={4},
       pages={278\ndash 280},
}

\end{biblist}
\end{bibdiv}

\end{document}